\numberwithin{equation}{section}
\newtheorem{theorem}{Theorem}[section]
\newtheorem{lemma}[theorem]{Lemma}
\newtheorem{corollary}[theorem]{Corollary}
\newtheorem{proposition}[theorem]{Proposition}
\newtheorem{claim}[theorem]{Claim}
\theoremstyle{definition}
\newtheorem{definition}[theorem]{Definition}
\newtheorem{notation}[theorem]{Notation}
\theoremstyle{remark}
\newtheorem{remark}[theorem]{Remark}
\newcommand{\G}{{\mathcal G}}
\newcommand{\N}{{\mathbb N}}
\newcommand{\R}{{\mathbb R}}
\newcommand{\Z}{{\mathbb Z}}
\newcommand{\x}{{\mathbf{x}}}
\newcommand{\y}{{\mathbf{y}}}
\newcommand{\J}{{\mathcal J}}
\newcommand{\A}{{\mathcal A}}
\newcommand{\bP}{\mathbb{P}}
\newcommand{\cross}{\times}
\newcommand{\mcg}{\mathcal{MC\kern0.04emG}}
\newcommand{\ml}{\mathcal{M \kern0.07emL}}
\newcommand{\pmf}{\mathcal{P \kern0.07em M \kern0.07em F}}
\newcommand{\cG}{\mathcal{G}}
\newcommand{\cR}{\mathcal{R}}
\newcommand{\teichmuller}{Teichm{\"u}ller{ }}
 \let\c@theorem=\c@subsection
 \let\c@conjecture=\c@subsection
 \let\c@lemma=\c@subsection
 \let\c@proposition=\c@subsection
 \let\c@claim=\c@subsection
 \let\c@question=\c@subsection
 \let\c@criterion=\c@subsection
 \let\c@vfconj=\c@subsection
 \let\c@definition=\c@subsection
 \let\c@notation=\c@subsection
 \let\c@remark=\c@subsection
 \let\c@example=\c@subsection
 \let\c@equation=\c@subsection
 \let\c@figure=\c@subsection
 \let\c@wrapfigure=\c@subsection
\begin{document}

\title[linear inv]{Every transformation is disjoint from almost every non-classical exchange.}
\author[Chaika]{Jon Chaika}
\address{Mathematics 209, University of Utah, 155 S 1400 E RM 233, Salt Lake City, UT 84112, USA.}
\email{chaika@math.utah.edu}
\author[Gadre]{Vaibhav Gadre}
\address{IAS and Mathematics Institute, University of Warwick, Coventry, CV4 7AL, UK.}
\email{gadre.vaibhav@gmail.com}

\begin{abstract}
A natural generalization of interval exchange maps are linear involutions, first introduced by Danthony and Nogueira \cite{Dan-Nog}. Recurrent train tracks with a single switch which we call non-classical interval exchanges \cite{Gad}, form a subclass of linear involutions without flips. They are analogs of classical interval exchanges, and are first return maps for non-orientable measured foliations associated to quadratic differentials on Riemann surfaces. We show that every transformation is disjoint from almost every irreducible non-classical interval exchange. In the appendix,  we prove that for almost every pair of quadratic differentials with respect to the Masur-Veech measure, the vertical flows are disjoint. 
\end{abstract}

\maketitle

\section{Introduction}
Here, we are interested in a dynamical property called disjointness for irreducible non-classical exchanges, a subclass of linear involutions without flips \cite{Dan-Nog}. Classical interval exchange maps arise as first return maps to intervals transverse to vertical foliations of abelian differentials on Riemann surfaces. Analogously, non-classical exchanges are first return maps to pairs of disjoint parallel transverse intervals to non-orientable foliations associated to quadratic differentials. Briefly, non-classical exchange are invertible piecewise isometries of a pair of disjoint interval decomposed into finitely many pieces, and satisfying some additional assumptions (\cite[Definition 2.1]{Boi-Lan}). Equivalently, non-classical exchanges can be pictorially defined as recurrent train tracks with a single switch on a Riemann surface. The definitions are presented in detail in Section \ref{niem}.

Two measure preserving transformations $(T,X, \mu)$ and $(S, Y, \nu)$ are said to be {\em disjoint} if the product measure $\mu \times \nu$ is the only invariant measure for the product transformation $T \times S$ with marginals $\mu$ and $\nu$. It is a way of saying that two measure preserving transformations are different and in particular, implies that they are not isomorphic. It was shown in \cite{Cha} that every ergodic transformation is disjoint from almost every classical exchange. Here, we prove the analogous result for non-classical exchanges under a mild technical condition. 

\begin{theorem}\label{disjoint}
Let $T: X \to X$ be $\mu$-ergodic. Then $T$ is disjoint with respect to almost every irreducible non-classical exchange with an orientation preserving band.
\end{theorem}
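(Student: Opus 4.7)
The plan is to follow Chaika's strategy for classical interval exchanges \cite{Cha}, adapted to the non-classical setting via the Rauzy--Veech renormalization for linear involutions of Boissy--Lanneau and its refinement for non-classical exchanges in \cite{Gad}.

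The first step is to establish a rigidity-based disjointness criterion. I would show that if a measure-preserving system $(S,Y,\nu)$ admits a sequence of integers $n_k \to \infty$ together with Borel sets $B_k \subset Y$ satisfying $\nu(B_k) \to 1$ on which $S^{n_k}$ converges to the identity in $L^1$, plus a lacunarity/equidistribution condition on the $n_k$, then $S$ is disjoint from every ergodic transformation $T$. The argument is a joining argument: an ergodic joining $\rho$ of $(T,\mu)$ and $(S,\nu)$ is asymptotically invariant under $\id \times S^{n_k}$, while $T^{n_k}$ equidistributes by the Birkhoff theorem applied to $T$, so averaging forces $\rho = \mu \times \nu$.

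Next, I would construct the rigidity sequences from the renormalization. At stage $k$ of Rauzy--Veech induction for a non-classical exchange, the phase space decomposes into Rokhlin towers of heights $h_k$, and $n_k = h_k$ is the natural candidate for a rigidity time. The orientation preserving band hypothesis enters crucially here: it guarantees at least one band whose induced first-return map involves no flip, so Veech-type distortion estimates can be applied on that band to give $S^{h_k} \approx \id$ on a set of measure approaching $1$. Recurrence of the induction to a fixed compact piece of parameter space, as established in \cite{Gad} for non-classical exchanges, ensures that such good stages occur infinitely often for almost every exchange.

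The main obstacle will be twofold. First, one must extract quantitative distortion bounds and log-integrability of the return-time cocycle for the non-classical case; while Boissy--Lanneau set up the framework for general linear involutions, the presence of flipped bands elsewhere in the exchange complicates the cocycle analysis, and this is where the orientation preserving band assumption does genuine work by pinning down a piece of the map on which classical-style estimates survive. Second, the lacunarity/equidistribution of $(n_k)$ needed for the disjointness criterion must be verified almost surely with respect to the renormalization measure; in the classical IET setting this follows from hyperbolicity of the Kontsevich--Zorich cocycle, and I would need the analogous positivity of Lyapunov exponents and mixing of the renormalization flow in the non-classical case, again drawing on \cite{Gad} and the Boissy--Lanneau framework.
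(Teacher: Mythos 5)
Your broad strategy---follow Chaika and manufacture rigidity sequences from Rauzy--Veech renormalization---is the right one, but the two central pieces are misidentified, and one of them is a genuine gap.

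The disjointness criterion you propose is not the one that is actually used, and it is too weak. You claim that rigidity times $n_k$ for $S$ plus a ``lacunarity/equidistribution condition'' on $(n_k)$ forces any ergodic joining $\rho$ to be product measure, with the heuristic that $T^{n_k}$ equidistributes by Birkhoff. But Birkhoff gives equidistribution of $\{T^n\}$ over $\emph{all}$ $n$, not over a fixed lacunary subsequence $(n_k)$, and there is no general argument that an arbitrary ergodic $T$ averages out along a sparse sequence. What Chaika's argument actually needs---and what the paper reproduces as its two bullet points right before Theorem~\ref{rigid}---is much stronger and shaped quite differently: (a) for almost every non-classical exchange $\x$, \emph{every} density-one set of integers contains a rigidity sequence for $\x$, and (b) for every $\alpha$, the set of $\x$ with eigenvalue $\alpha$ has measure zero. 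Condition (a) lets you hunt for rigidity times inside whatever density-one set the ergodic theorem for $T$ hands you; condition (b) handles the residual eigenvalue obstruction. Your sketch contains neither the ``find rigidity in any density-one set'' upgrade nor the eigenvalue condition, and without them the joining argument does not close.

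The second gap is that you never address total ergodicity, which is precisely where the ``orientation preserving band'' hypothesis does its real work. You suggest it is used to find a band ``whose induced first-return map involves no flip'' so that ``Veech-type distortion estimates'' apply. But the objects here are linear involutions \emph{without} flips to begin with; the orientation-reversing bands of a non-classical exchange are not flips, and the distortion/Jacobian machinery from \cite{Gad} applies to all bands regardless. The actual role of the hypothesis, spelled out in Section~7, is arithmetic: via Lemma~\ref{sl prime 2} one shows that for almost every $\x$ there are $C$-distributed stages whose matrix has an orientation-preserving column of norm coprime to any given prime $p$, whence (Proposition~\ref{prime approx}) cyclic approximations of height coprime to $p$ exist, whence (Proposition~\ref{suff cond}) $\x$ is totally ergodic, whence the eigenvalue condition (b) above. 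Without an orientation-preserving band $\x^2$ is not even minimal, so some argument of this kind is unavoidable. Finally, the technical inputs you invoke---log-integrability of the cocycle, hyperbolicity of Kontsevich--Zorich, mixing of the renormalization flow---are not what the paper uses; it relies instead on the elementary $C$-distribution/Jacobian estimates of \cite{Gad} (Theorems~\ref{Uniform-distortion} and \ref{Strong-Normality}), which are weaker but sufficient and avoid any Lyapunov-exponent machinery.
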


Almost every irreducible non-classical interval exchange is uniquely ergodic \cite{Mas}. See also \cite{Gad} for a proof with techniques similar to this paper. As a result of this we obtain the following corollary:
\begin{corollary} For any uniquely ergodic non-classical exchange $S_1$ and almost every irreducible non-classical exchange $S_2$ with an orientation preserving band, the product transformation $S_1 \times S_2$ is uniquely ergodic.
\end{corollary}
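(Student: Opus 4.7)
The plan is to deduce the corollary as a direct combination of Theorem \ref{disjoint} with the result cited from \cite{Mas, Gad} that almost every irreducible non-classical exchange is uniquely ergodic. The underlying principle is the standard ergodic-theoretic fact that if two measure preserving systems are uniquely ergodic and disjoint from each other, then their product is also uniquely ergodic. I would first make this principle explicit, and then verify that almost every $S_2$ simultaneously satisfies unique ergodicity and disjointness from $S_1$.

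More concretely, let $\nu$ be an arbitrary $S_1 \times S_2$-invariant Borel probability measure on the product space. The pushforwards of $\nu$ under the two coordinate projections are invariant under $S_1$ and $S_2$ respectively, so when both factors are uniquely ergodic these pushforwards must coincide with the unique invariant probabilities $\mu_1$ and $\mu_2$. Hence $\nu$ is an $S_1 \times S_2$-invariant measure with marginals $\mu_1$ and $\mu_2$. Unique ergodicity of $S_1$ implies in particular that $S_1$ is $\mu_1$-ergodic, so Theorem \ref{disjoint} applies with $T = S_1$ and forces $\nu = \mu_1 \times \mu_2$ for every $S_2$ in some full-measure set $\F_1$ in the space of irreducible non-classical exchanges with an orientation preserving band.

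Let $\F_2$ denote the full-measure set of uniquely ergodic non-classical exchanges provided by \cite{Mas}. For any $S_2 \in \F_1 \cap \F_2$, both hypotheses used above hold, so $\mu_1 \times \mu_2$ is the unique $S_1 \times S_2$-invariant probability measure, and $S_1 \times S_2$ is uniquely ergodic. Since the intersection $\F_1 \cap \F_2$ is again of full measure, this proves the corollary. There is no real obstacle inside the corollary itself; all the substantive work is packaged into Theorem \ref{disjoint}, and the role of the proof is simply to unpack the standard passage from disjointness plus unique ergodicity of the factors to unique ergodicity of the product.
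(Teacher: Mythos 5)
Your proposal is correct and is exactly the argument the paper has in mind; the paper states the corollary without proof, introducing it with ``As a result of this we obtain,'' and the intended deduction is precisely the standard one you spell out: marginals of any $S_1\times S_2$-invariant measure are forced to be $\mu_1,\mu_2$ by unique ergodicity of the factors, and then Theorem~\ref{disjoint} (applied with $T=S_1$, using that the unique invariant measure of a uniquely ergodic system is ergodic) pins down the joining as the product measure, on the full-measure intersection of the two good sets of $S_2$'s.
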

\begin{remark}
The results above are false if there are no orientation preserving bands. As will be clear from the definition, if a non-classical exchange $\x$ has no orientation preserving bands then $\x^2$ leaves $I_+$ and $I_-$ invariant, so it is not even minimal.
\end{remark}

In classical interval exchanges, an interval $I$ is partitioned into $d$ subintervals, these subintervals are permuted and re-glued preserving orientation to get an invertible, piecewise order preserving, piecewise isometry of $I$. The widths of the subintervals and the permutation used for gluing completely determine a classical interval exchange. There is a way to draw these maps pictorially:

\begin{wrapfigure}{r}{0.33\textwidth}
\begin{center}
\ \epsfig{file=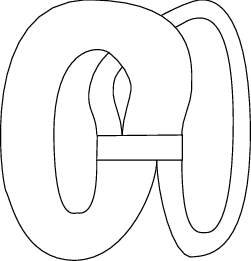, width=0.25\textwidth, height= 0.18\textheight}
\end{center}
\caption{A classical interval exchange.}
\label{class-iem}
\end{wrapfigure}

We draw the original interval $I$ horizontally and then thicken it vertically to get two copies $I_+$ and $I_-$, the top and the bottom intervals. Let $\epsilon: I_+ \sqcup I_- \to I_+\sqcup I_-$ be the map that switches the intervals i.e., $\epsilon(I_+) = I_-$ and $\epsilon(I_-) = I_+$. Divide $I_+$ into $d$ subintervals with the prescribed widths. Divide $I_-$ also into $d$ subintervals but incorporate the permutation to decide the widths. Thus, each subinterval of $I_+$  pairs off by a translation with a subinterval of $I_-$ with the same width whose placement is determined by the permutation. We join the pair of subintervals by a band of uniform width. As an example, Figure~\ref{class-iem} shows a classical exchange with 2 bands, which is a rotation. 

The composition of the map from $I_+$ to $I_-$ given by the vertical flow up from $I_+$ along the bands, followed by $\epsilon$, exhibits the classical exchange, as a map from $I_+$ to $I_+$. The inverse of this exchange is then the map from $I_-$ to $I_-$ given by flowing the other way. Thus, we have an equivalent formulation of the interval exchange as a Lebesgue measure preserving transformation on the disjoint union $I_+ \sqcup I_-$. We shall use this formulation since non-classical exchanges are defined in a similar way: the intervals $I_+$ and $I_-$ are partitioned into subintervals such that paired off intervals are joined by bands of uniform width, but now we allow bands to go from $I_+$ to $I_+$ and also from $I_-$ to $I_-$. The non-classical exchange as a map from $I_+ \sqcup I_-$ to itself is defined by flow along the bands followed by $\epsilon$. We shall call the bands that go from  $I_+$ to $I_+$ and from $I_-$ to $I_-$ {\em orientation reversing}, since the exchange restricted to the subintervals given by the ends of such a band is orientation reversing. 

A classical exchange is said to be {\em irreducible}, if the permutation $\pi$ used in the gluing is {\em irreducible}: for all subsets $\{ 1, \cdots , k\}, k < d$, we have $\pi( \{ 1, \cdots, k\} ) \neq \{1, \cdots, k\}$. Pictorially, a reducible classical exchange is a concatenation of two classical exchanges side by side for all widths of the bands, and the dynamics can be studied by restricting to each piece. For technical reasons, the appropriate notion of irreducibility for non-classical exchanges is subtler than the straightforward irreducibility notion above. This was done by Boissy and Lanneau in \cite{Boi-Lan}. Here, we consider only those non-classical exchanges that satisfy the Boissy and Lanneau definition of irreducibility. The precise definitions and details will be provided in Section~\ref{niem}.

For a fixed permutation of a classical exchange, the set of widths normalized to have sum 1, is the standard simplex in $\R^d$, and carries a natural Lebesgue measure. Thus, the full parameter space is a disjoint union of simplices over irreducible permutations of $d$ letters. For non-classical exchanges, after fixing the combinatorics, the set of normalized widths satisfies an additional switch condition that the sum of the widths of orientation reversing bands of $I_+$ is equal to the sum of widths of orientation reversing bands of $I_-$. Thus, it has codimension 1 in the standard simplex, and  inherits a natural Lebesgue measure. We shall call it the {\em configuration space} associated to the combinatorics. The full parameter space is the disjoint union of the configuration spaces. 

Following \cite{Cha}, we use a criteria of Hahn and Parry that two ergodic transformations on a pair of measures spaces are disjoint if the induced maps on $L^2$-functions on the spaces are spectrally singular. Our proof follows the proof in \cite{Cha} while verifying the individual steps for non-classical exchanges. 

A key step in the proof in \cite{Cha} are rigidity sequences for classical exchanges. The existence of these sequences follows from a cyclic approximation theorem of Veech \cite{Vee2}. We prove a similar cyclic approximation theorem for non-classical exchanges, and the existence of rigidity sequences follows from it. 

As an analog of Theorem \ref{disjoint}, we prove disjointness of vertical flows for quadratic differentials in the appendix. To be precise, we show:

\begin{theorem}\label{flow disjoint} With respect to the Masur-Veech measure, for almost every pair of quadratic differentials the vertical flows are disjoint.
\end{theorem}

The proof of Theorem \ref{flow disjoint} is simpler and relies on mixing of the \teichmuller geodesic flow.

\subsection{Outline of the paper:} In Section 2, we define non-classical exchanges and explain irreducibility for them. In Section 3, we define Rauzy induction for non-classical exchanges, and state the key result of Boissy and Lanneau \cite{Boi-Lan} for attractors of the Rauzy diagram. In Section 4, we summarize the results from \cite{Gad} for expansions of non-classical exchanges by iterated Rauzy induction. In Section 5, we state and prove the cyclic approximation theorem for non-classical exchanges. In Section 6, we establish the existence of rigidity sequences. In Section 7, we prove total ergodicity for almost every non-classical exchange with an orientation preserving band present. In Section 8, we assemble the ingredients to conclude the proof of the main theorem verbatim from \cite{Cha}. In the appendix, we prove disjointness of vertical flows for quadratic differentials.

\subsection{Acknowledgements:} Chaika was supported in part by NSF grant DMS-1004372. Gadre was supported by a Simons Travel Grant. The authors thank the 2011 Park City Math Institute program.

\section{Non-classical Interval Exchanges}\label{niem}

Let $\A$ denote an alphabet over $d$ letters. In the definition that follows, the set $\A$ labels the bands. A classical exchange is determined by the widths $(\lambda_\alpha), \alpha \in \A$ of the subintervals and bijections $p_0$ and $p_1$ from $\A$ to the set $\{1, \dotsc, d\}$ as follows: In the plane, draw two vertically aligned horizontal copies $I_+$ and $I_-$ of the interval $I = [0,\sum_\alpha \lambda_\alpha)$. Call them the top interval and the bottom interval respectively. Let $\epsilon: I_+ \sqcup I_- \to I_+ \sqcup I_-$ be the map that switches the intervals i.e., $\epsilon(I_+) = I_-$ and $\epsilon(I_-) = I_+$. Subdivide $I_+$ into $d$ subintervals with widths $\lambda_{p_0^{-1}(1)}, \dotsc, \lambda_{p_0^{-1}(d)}$ from left to right. Subdivide $I_-$ into $d$ subintervals with widths $\lambda_{p_1^{-1}(1)}, \dotsc, \lambda_{p_1^{-1}(d)}$ from left to right.  For each $\alpha \in \A$, join the $p_0(\alpha)$ subinterval of $I_+$ to the $p_1(\alpha)$ subinterval of $I_-$ by a band of uniform width $\lambda_\alpha$.  The vertical flow along the bands from $I_+$ to $I_-$, followed by $\epsilon$ exhibits the classical exchange as a map from $I_+$ to $I_+$. Similarly, the inverse of the interval exchange is realized as a map from $I_-$ to itself by flowing reverse along the bands, followed by $\epsilon$. For classical exchanges, every band has one end on $I_+$ and the other on $I_-$.

To define non-classical exchanges, the intervals $I_+$ and $I_-$ are partitioned into subintervals that come in pairs, so that they can be joined by bands with uniform width. In this case however, we require bands that have both ends in $I_+$, and bands that have both ends in $I_-$. Such a band shall be called {\em orientation reversing} because the non-classical exchange restricted to the subintervals given by the ends of the band is orientation reversing. As before, the exchange as a map $T: I_+ \sqcup I_- \to I_+ \sqcup I_-$ is given by flowing along the band away from the subinterval in question, followed by $\epsilon$. The Lebesgue measure $l$ on $I_+ \sqcup I_-$ is obviously invariant under $T$. 

\begin{wrapfigure}{r}{0.36\textwidth}
\begin{center}
\ \epsfig{file=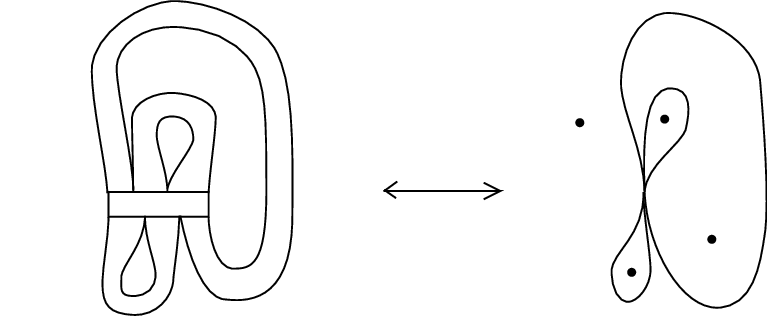, width=0.31\textwidth}
\end{center}
\vskip 5pt \caption{A non-classical exchange on a 4-puntured sphere.}
\label{iem}
\end{wrapfigure}

We shall work with {\em labelled} non-classical exchanges i.e.,~there is a bijection from $\A$ to the set of bands. As defined by \cite{Boi-Lan}, the labeling can be thought of as given by {\em a generalized permutation} which is a 2-1 map from the set $\{1, \dotsc , 2d\}$ to $\A$. Thus, $\pi^{-1} \alpha$ denote the ends of the band $\alpha$. A generalized permutation $\pi$ is of type $(l,m)$ where $l+m=2d$ if the set $\{1, \dotsc, l\}$ enumerates the subintervals of $I_+$ from left to right and the set $\{l+1, \dotsc, l+m=2d\}$ enumerates the subintervals of $I_-$ from left to right. A generalized permutation defines a fixed point free involution $\sigma$ of $\{1, \dotsc, 2d\}$ by:
\[
\pi^{-1}(\pi(i)) = \{ i , \sigma(i)\}.
\]
The equivalence classes under $\sigma$ can be indexed with the elements of $\A$ and correspond to the bands in our picture. The generalized permutations encoding a non-classical exchange do not arise from a true permutation $p= p_1p_0^{-1}$ i.e., it has a positive integer $i$ with $i,  \sigma(i) \leqslant l$ and a positive integer $j$ with $l+1 \leqslant j, \sigma(j)$. This means that there are orientation reversing bands for $I_+$ and $I_-$. Following Kerckhoff, we shall call the positions that are rightmost on the intervals $I_+$ and $I_-$, the {\em critical} positions. We let $I(\alpha)$ be the union of subintervals at the ends of band $\alpha$. 

After a horizontal isotopy collapsing $I_\pm$ to points, the spine of the picture can be thought of as an abstract train track $\tau$. In particular, if $\tau$ admits an embedding into an orientable surface $S$ as a {\em large} train track i.e., with complementary regions that are polygons or once-punctured polygons, then the non-classical exchanges associated to the combinatorics, give measured foliations on $S$ belonging to the stratum of quadratic differentials indicated by the complementary regions. For example, Figure~\ref{iem} shows a non-classical exchange on a 4-punctured sphere, in the principle stratum.

\subsection{Irreducibility:} 

A classical exchange is called {\em irreducible} if there is no $k< d$ such that $p_0^{-1} \{ 1, \cdots, k \} = p_1^{-1} \{ 1,\cdots, k \}$. In other words, a reducible classical exchange is a concatenation of two classical exchanges over the subsets $\{1, \cdots, k\}$ and $\{ k+1, \cdots, d\}$. In terms of our picture, for all widths of the subintervals, the intervals $I_\pm$ partition into two subintervals $I_\pm(1)$ and $I_\pm(2)$, and the set of bands partition into two sets $\A_1$ and $\A_2$ such that all bands in $\A_1$ have both ends in $I_\pm(1)$, and all bands in $\A_2$ have both ends in $I_\pm(2)$. 

For non-classical exchanges, a straightforward combinatorial reduction as above can be considered. However, for technical reasons described in \cite{Boi-Lan}, a broader notion of reducibility becomes necessary. From now on, we shall assume that the non-classical exchanges considered are irreducible in this sense of Boissy and Lanneau. See the end of Section 3 for more details.

\section{Rauzy induction}\label{Rauzy}

A key technical tool is Rauzy induction, which is simply the first return map to an appropriate pair of subintervals $I'_\pm$. A precise definition is also given in Section 2.2 of \cite{Boi-Lan}. Here, we describe it in terms of the picture, and focus on the coding of iterations by products of elementary matrices. These matrices describe the induced map on the parameter space. 

Iterations of Rauzy induction are analogous to continued fraction expansions. In fact, when a classical exchange has two bands, the expansion is equivalent to the continued fraction expansion of the ratio of their widths.

\begin{figure}[htb]
\begin{center}
\ \psfig{file=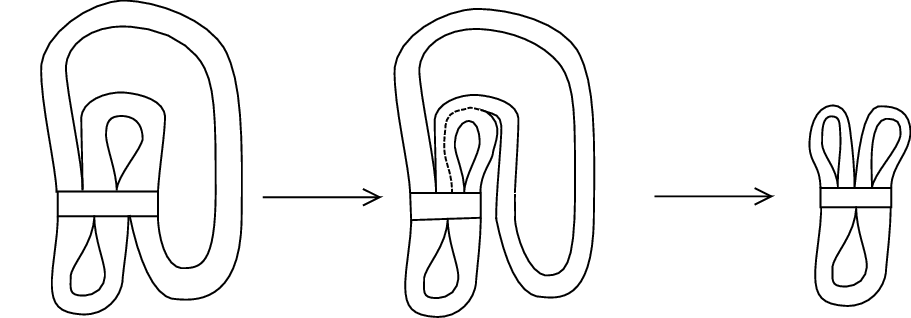, height=1.5truein, width=5truein} \caption{Rauzy Induction} \label{rauzyind}
\end{center}
\setlength{\unitlength}{1in}
\begin{picture}(0,0)(0,0)
\put(1.3,1.6){$\alpha_0$} \put(1.6,0.4){$\alpha_1$} \put(5.1,1.5){$\alpha_1$} \put(5.5,1.5){$\alpha_0$} \put(5.3,0.4){$\alpha'_1$} \put(0.3,1.1){$\lambda_{\alpha_0} = 3/7$} \put(0.3,0.9){$\lambda_{\alpha_1} = 1/7$} \put(5.7,1.1){$\lambda'_{\alpha_0} = 2/7$} \put(5.7,0.9){$\lambda'_{\alpha_1}= 1/7$}
\end{picture}
\end{figure}

Let $T$ be a non-classical exchange. Let $\alpha_0$ and $\alpha_1$ be the bands in the critical positions with $\alpha_0$ on $I_+$. First, suppose that $\lambda_{\alpha_0} > \lambda_{\alpha_1}$. Then we slice as shown in Figure~\ref{rauzyind} till we hit $I_+ \sqcup I_-$ for the first time. The $\alpha_0$ band remains in its critical position, but typically a different band $\alpha'_1$ moves into the other critical position. Furthermore, the new width of $\alpha_0$ is $ \lambda_{\alpha_0} - \lambda_{\alpha_1}$. All other widths remain unchanged. If instead $\lambda_{\alpha_1} < \lambda_{\alpha_0}$, then we slice in the opposite direction. In either case, we get a new non-classical exchange with combinatorics and widths as described above. This operation is called {\em Rauzy} induction. Since Rauzy induction is represented pictorially by one band splitting another, we shall simply call it a {\em split}. In fact, this is consistent with the notion of a split in the context of the underlying train-tracks. Iterations of Rauzy induction are therefore called {\em splitting sequences}.

In the first instance above, let $I' = [0,\sum_{\alpha \neq \alpha_1} \lambda_{\alpha})$, and in the second, let $I' = [0, \sum_{\alpha \neq \alpha_0} \lambda_{\alpha})$. Let $I_+(1)$ and $I_-(1)$ denote the copies of $I'$ in $I_+$ and $I_-$ respectively. Rauzy induction is then the first return map to $I_+(1) \sqcup I_-(1)$. Let $\cR(T)$ denote the non-classical exchange induced on $I_+(1) \sqcup I_-(1)$. Iteratively, let $I_\pm(n)$ be the (nested) subintervals of $I_\pm$ to which $n$ splits give the first return map. Let $\cR^n(T)$ be the non-classical exchange induced on $I_+(n) \sqcup I_-(n)$. 

 In general, not all instances of Rauzy induction are defined. To enumerate:
\begin{enumerate}
\item When $\alpha_0 = \alpha_1$ neither of the splits are defined. 
\item When $\lambda_{\alpha_0} = \lambda_{\alpha_1}$ then neither of the splits is defined.
\item When $\alpha_0$ is an orientation reversing band on $I_+$ and $\alpha_1$ is the only orientation reversing band on $I_-$, then $\alpha_0$ can split $\alpha_1$ but not the other way round i.e., only one of the splits is defined.
\end{enumerate}
Case (1) is ruled out when the non-classical exchange is irreducible and Case (2) represents a set of measure zero. In fact, as shown in Section of \cite{Gad}, almost surely, iterations of Rauzy induction continue ad infinitum. 

 \subsection{Encoding expansions by matrices:}

\subsubsection{Description of the parameter space:}
Consider the vector space $\R^{\A}$ and let $\R_{\geqslant 0}^{\A}$ be the set of points with non-negative coordinates. Let $\Delta$ denote the standard $(d-1)$-simplex in $\R^{\A}$ with sum of the coordinates equal to 1. An assignment of widths to the bands is a point in $\R^{\A}$. Normalizing the widths so that their sum is 1 restricts us to $\Delta$.  

To be consistent with a generalized permutation $\pi$, any assignment of widths must satisfy the switch condition defined by $\pi$. We denote the set of such widths normalized to sum 1 by $W(\pi)$, and un-normalized widths by $\R^+ W(\pi)$. Let $\A_+$ and $\A_-$ be the set of orientation reversing bands incident on $I_+$ and $I_-$ respectively. The points in $W(\pi)$ satisfy the additional constraint: 
\[
\sum_{\alpha \in \A_+} \lambda_{\alpha}  = \sum_{\alpha \in \A_-} \lambda_{\alpha}.
\]
Thus $W(\pi)$ is the intersection with $\Delta$ of a codimension 1 subspace of $\R^\A$.  For $\alpha \in \A_+$ and $\beta \in \A_-$, let $e_{\alpha \beta}$ be the midpoint of the edge $[e_\alpha, e_\beta]$ of $\Delta$ joining the vertices $e_{\alpha}$ and $e_{\beta}$. The subset $W(\pi)$ is the convex hull of the points $e_{\alpha \beta}$ and $e_{\rho}$ for $\rho \notin \A_+ \cup \A_-$.

There are finitely many generalized permutations $\pi$ of an alphabet $\A$ over $d$ letters, and hence finitely many convex codimension 1 subsets of $\Delta$ that could be $W(\pi)$. We call the $W(\pi)$ {\em configuration spaces}. The full parameter space is a disjoint union of the configuration spaces $W(\pi)$. 

\subsubsection{Matrices:}
Let $I$ denote the $d \times d$ identity matrix on $\R^\A$. For $\alpha, \beta \in \A$, let $M_{\alpha \beta}$ be the $d \times d$-matrix with the $(\alpha,\beta)$ entry 1 and all other entries 0. After Rauzy induction, the relationship between the old and new width data is expressed by
\[
\lambda = E \lambda',
\]
where the matrix $E$ has the form $E = I + M$. In the first instance of the split, when $\lambda_{\alpha_0} > \lambda_{\alpha_1}$, the matrix $M = M_{\alpha_0\alpha_1}$; in the second instance of the split, when $\lambda_{\alpha_1} > \lambda_{\alpha_0}$, the matrix $M = M_{\alpha_1 \alpha_0}$. Thus, in either case the matrix $E$ is an elementary matrix, in particular $E \in SL(d; \Z)$. If $B$ is any $d \cross d$ matrix then in the instance when $\lambda_{\alpha_0} > \lambda_{\alpha_1}$, the action on $B$ by right multiplication by $E$ has the effect that the
$\alpha_1$-th column of $B$ is replaced by the sum of the $\alpha_0$-th column and $\alpha_1$-th column of $B$. We phrase this as: in the split, the $\alpha_0$-th column {\em adds to} the $\alpha_1$-th column. A similar statement holds when $\lambda_{\alpha_0} > \lambda_{\alpha_1}$.

\subsection{Rauzy diagram}\label{Rauzy-diagram}

For non-classical exchanges, one constructs an oriented graph similar to the Rauzy diagram for a classical exchange. However, there are some key differences in this context.

Construct an oriented graph $G$ as follows: the nodes of the graph are generalized permutations $\pi$ of an alphabet $\A$ over $d$ letters satisfying the conditions in the third paragraph at the beginning of Section 3. We draw an arrow from $\pi$ to $\pi'$, if $\pi'$ results from splitting $\pi$. For each node $\pi$, there are at most two arrows coming out of it. A splitting sequence gives us a directed path in $G$.

For irreducible classical exchanges, each connected component of the Rauzy diagram is an {\em attractor} i.e., any node can be joined to any other node by a directed path. Each component is called a Rauzy class. 

The Rauzy diagram for non-classical exchanges is more complicated and need not have such strong recurrence properties. See the examples in the Appendix of \cite{Boi-Lan} or see Section 10 of \cite{Dun-DTh}. In particular, the straightforward definition of reducibility does not work from the point of view of the Rauzy diagram: there are generalized permutations that are not obviously reducible that can split to an obviously reducible one with positive probability. Hence, the broader definition of reducibility defined by Boissy and Lanneau \cite{Boi-Lan}, becomes necessary. In \cite{Boi-Lan}, they prove that

\begin{theorem}[\cite{Boi-Lan} Theorem C ]\label{attractors}
Let $G_{irr}$ be the subset of nodes of $G$ corresponding to the strongly irreducible generalized permutations. Then $G_{irr}$ is closed under forward iterations of Rauzy induction. Moreover, each connected component of $G_{irr}$ is strongly connected i.e., any node $\pi$ in a connected component of $G_{irr}$ can be connected to any other node $\pi'$ in the same component of $G_{irr}$ by a sequence of splits.
\end{theorem}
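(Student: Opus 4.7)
The plan is to prove the two parts separately: closure of $G_{irr}$ under forward Rauzy induction, and strong connectedness of each connected component. Closure should essentially be built into the Boissy-Lanneau definition of strong irreducibility, while strong connectedness requires producing explicit directed return paths that never leave $G_{irr}$.

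For closure, I would argue by contrapositive. Suppose $\pi \to \pi'$ is a single Rauzy arrow and $\pi'$ admits a Boissy-Lanneau reduction. A split only changes the width and position of one band in a critical position, so the labeled set of bands of $\pi'$ is canonically identified with that of $\pi$, and the generalized permutation $\pi'$ differs from $\pi$ by a local rearrangement near the critical ends. Any reducing decomposition of the bands of $\pi'$ therefore pulls back to a candidate decomposition of the bands of $\pi$, and I would verify in a case analysis (split from the top critical band versus the bottom, and orientation-preserving versus orientation-reversing critical bands) that this candidate satisfies the Boissy-Lanneau reducibility condition for $\pi$ and for the pulled-back widths in $W(\pi)$. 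This contradicts strong irreducibility of $\pi$, and closure follows.

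For strong connectedness, I would follow the standard Rauzy-Veech-Zorich playbook. Fix a component $\cC$ of $G_{irr}$ and a reference node $\pi_0 \in \cC$. Forward reachability of every node from $\pi_0$ inside $\cC$ is immediate from the definition of a connected component and the closure statement just proved. For backward reachability of $\pi_0$ from an arbitrary node $\pi \in \cC$, I would exploit the interpretation of Rauzy moves as first return maps to a nested pair of subintervals and use the freedom in choosing widths in $W(\pi)$ to drive the induction through any prescribed finite sequence of splits. Concretely, I would construct, for each outgoing arrow $\pi \to \pi'$ in the diagram, a compensating forward loop at $\pi'$ that returns to $\pi_0$; piecing these loops together along an arbitrary directed walk produces the required directed path from $\pi$ back to $\pi_0$. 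Closure ensures that none of these intermediate nodes fall outside $G_{irr}$.

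The main obstacle I anticipate is the pull-back step in the closure argument, since the Boissy-Lanneau notion of reducibility is genuinely more refined than the naive partition-of-bands notion, and a decomposition of $\pi'$ that reduces $\pi'$ need not obviously induce one that reduces $\pi$ in the same refined sense. Handling this requires a careful case analysis keyed to which critical band splits which, and to the orientation types of the two bands involved, together with tracking how the switch condition $\sum_{\alpha \in \A_+}\lambda_\alpha = \sum_{\alpha \in \A_-}\lambda_\alpha$ transforms under the Rauzy matrix $E = I + M$. Once closure is established, the strong connectedness argument reduces to a bookkeeping exercise about loops in the Rauzy diagram.
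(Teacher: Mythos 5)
This theorem is cited, not proved, in the paper: it is Theorem~C of Boissy--Lanneau~\cite{Boi-Lan}, and the present paper simply quotes it as an external input. There is therefore no proof in the paper to compare your argument against; what you are attempting is to reprove a substantial result from the literature from scratch.

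Taken on its own terms, your proposal has real gaps in both halves. For closure, you acknowledge the difficulty yourself: Boissy--Lanneau irreducibility is a global condition (formulated in terms of the possibility of a decomposition corresponding to a vertical saddle connection, not just a partition of band labels), and you never actually state it, let alone carry out the ``careful case analysis'' you promise. A sketch that says ``pull back the reducing decomposition and verify'' is not a proof, and the paper's own remark in Section~\ref{Rauzy-diagram} --- that naively irreducible permutations can split, with positive probability, to naively reducible ones --- is precisely a warning that this pullback step is where all the work is. For strong connectedness, the argument is circular. Your claim that forward reachability from a fixed $\pi_0$ is ``immediate from the definition of a connected component'' is false: a connected component of a directed graph (in the sense the theorem is using) is a weakly connected component, and weak connectivity together with forward-closure of $G_{irr}$ does not give you directed paths in either direction. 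Your fix --- constructing, for each arrow $\pi\to\pi'$, a ``compensating forward loop at $\pi'$ that returns to $\pi_0$'' --- presupposes that $\pi_0$ is forward-reachable from $\pi'$, which is exactly the statement you are trying to prove. The actual Boissy--Lanneau proof is a careful structural argument specific to generalized permutations, and it is not reconstructible from the generic ``Rauzy--Veech--Zorich playbook'' as you describe it.
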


For the rest of the paper, the generalized permutations considered will be irreducible in the stronger sense, and in a single attractor $\cG$ of $G$. 

\section{Dynamics}\label{dynamics}

In this section, we analyze the expansion by splitting sequences on the parameter space. 

\subsubsection{Preliminary notation:}
Given a matrix $A$ with non-negative entries, we define the projectivization $TA$ as a map from $\Delta$ to itself by
\[
\bP A(\y) = \frac{A \y}{\vert A \y \vert}.
\]
where if $\y = (y_1, y_2, \cdots, y_d)$ in coordinates then $\vert \y \vert = \sum \vert y_i \vert$. This shall be the norm used throughout. The norm is additive on $\R_{\geqslant 0}^{d}$, i.e.~for $\y, \y'$ in $\R_{\geqslant 0}^{d}$, $\vert \y + \y'\vert = \vert \y \vert + \vert \y' \vert$.

\subsection{Iterations of Rauzy induction:}
Let $\pi_0  \in \cG$. The non-classical exchanges with generalized permutation $\pi_0$ are points in the configuration space $W(\pi_0)$. Let $\x = (\lambda_\alpha) \in W(\pi_0)$. As shown in \cite{Gad}, almost every $\x $ has an infinite expansion. An infinite expansion determines an infinite directed path in $\cG$. 

A finite directed path $\jmath: \pi_0  \to \pi_1 \to \dotsc \to \pi_n$ in $\cG$ shall be called a {\em stage} in the expansion. Let $E_i$ denote the elementary matrix associated to the split $\pi_{i-1} \to \pi_{i}$. The image of $W(\pi_i)$ under the projective linear map $\bP E_i$ lies in $W(\pi_{i-1})$. The matrix $Q_\jmath$ associated to the stage is given by 
\[
Q_\jmath = E_1 E_2 \dotsc E_n.
\]
The set $\bP Q_\jmath(W(\pi_n))$ is the set of all $\x \in W(\pi_0)$ whose expansion begins with $\pi_0 \to \pi_1 \to \cdots \to \pi_n$. 

In the expansion of $\x$, whenever it is necessary to emphasize the dependence on $\x$, we shall denote the nodes by $\pi_{\x,i}$, the configuration spaces defined by $\pi_{\x, i}$ by $W(\pi_{\x,i})$, and the elementary matrices associated to $\pi_{\x,i-1} \to \pi_{\x,i}$ by $E_{x,i}$. Thus, given a stage $\jmath: \pi_0  \to \pi_1 \to \dotsc \to \pi_n$, the set $\bP Q_\jmath(W(\pi_n))$ is precisely the set of all $\x \in W(\pi_0)$ for which $\pi_{\x,i} = \pi_i$ for all $i \leqslant n$. 

The actual (or un-normalized) widths $\lambda^{(n)}$ are given by 
\[
\x = Q_\jmath \lambda^{(n)}.
\]
The projectivization $\x^{(n)} = \lambda^{(n)}/ \vert \lambda^{(n)} \vert$ belongs to $W(\pi_n)$. Recall that $\cR^n(\x)$ is the non-classical exchange on subintervals $I_\pm(n)$ induced by $\x$. The widths of the bands in $\cR^n(\x)$ are exactly $\lambda^{(n)}$.  Let $I(\alpha, n)$ denote the union of subintervals of $I_\pm(n)$ given by the ends of band $\alpha$. 

For a point $t \in I_+(n) \sqcup I_-(n)$, let $m_n(t)$ be the first return time to $I_+(n) \sqcup I_-(n)$ under the transformation $\x$. It follows immediately that $m_n(t)$ is constant on each $I(\alpha, n)$. So for any $t \in I(\alpha, n)$, we will denote the return time by $m_n(\alpha)$. For $t \in I(\alpha, n)$, consider the finite set 
\[
S_\alpha(n) = \{t, \x(t), \x^2(t), \cdots, \x^{m_n-1}(t) \}.
\]
\begin{lemma}\label{matrix-elements}
The $(\beta, \alpha)$ entry of $Q_n$ counts the number of points in $S_\alpha(n)$ that lie in $I(\beta)$. In other words, the $(\beta, \alpha)$ entry of $Q_n$ counts the number of times a point in $I(\alpha, n)$ visits $I(\beta)$ under the exchange $\x$, before the first return to $I_+(n) \sqcup I_-(n)$. 
\end{lemma}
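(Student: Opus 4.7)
The plan is to prove the lemma by induction on $n$, using the factorization $Q_{n+1} = Q_n E_{n+1}$ together with the identification of $\cR^{n+1}(\x)$ as the first return map of $\cR^n(\x)$ to $I_\pm(n+1) \subset I_\pm(n)$. The base case $n=0$ is immediate: $Q_0 = I$, the first return map to $I_+ \sqcup I_-$ is $\x$ itself, $m_0(\alpha)=1$, and $S_\alpha(0)=\{t\}$ for $t \in I(\alpha)$; the $(\beta,\alpha)$ entry of the identity matrix is $\delta_{\beta\alpha}$, which records exactly whether $t$ lies in $I(\beta)$.

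For the inductive step, consider the split $\pi_n \to \pi_{n+1}$ and suppose without loss of generality that $\lambda_{\alpha_0}^{(n)} > \lambda_{\alpha_1}^{(n)}$, so that $E_{n+1} = I + M_{\alpha_0 \alpha_1}$. By the description of Rauzy induction, $I_\pm(n+1)$ is obtained from $I_\pm(n)$ by deleting two subintervals $J_+, J_-$ of width $\lambda_{\alpha_1}^{(n)}$ at the critical positions: $J_-$ is one end of the band $\alpha_1$, while $J_+$ is a subinterval of the $I_+$-end of $\alpha_0$. Right multiplication by $E_{n+1}$ leaves every column of $Q_n$ unchanged except the $\alpha_1$-th, which becomes the sum of the old $\alpha_0$-th and $\alpha_1$-th columns. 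The geometric claims to verify are correspondingly twofold.

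For $\alpha \neq \alpha_1$ and $t \in I(\alpha, n+1)$, the image $\cR^n(\x)(t)$ is the $\epsilon$-image of the flow of $t$ to the other end of band $\alpha$; since that other end lies away from the critical positions (irrespective of whether $\alpha$ is classical or orientation-reversing), $\cR^n(\x)(t) \in I_\pm(n+1)$. Hence $\cR^{n+1}(\x)(t)=\cR^n(\x)(t)$ and $S_\alpha(n+1)=S_\alpha(n)$, so the inductive hypothesis yields $(Q_{n+1})_{\beta,\alpha} = (Q_n)_{\beta,\alpha}$, matching the unchanged column. For $\alpha = \alpha_1$ and $t \in I(\alpha_1, n+1)$ (the non-critical end of $\alpha_1$), the image $\cR^n(\x)(t)$ is the $\epsilon$-image of $J_-$, which equals $J_+ \subset I(\alpha_0, n)$; so a further application of $\cR^n(\x)$ is required, landing in the $\epsilon$-image of $\alpha_0$'s other end, which by the previous case lies in $I_\pm(n+1)$. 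Thus $\cR^{n+1}(\x)(t) = (\cR^n(\x))^2(t)$, and splitting the orbit $S_{\alpha_1}(n+1)$ at the intermediate point $\cR^n(\x)(t)$ decomposes its visits to each $I(\beta)$ as the sum of visits from $t \in I(\alpha_1,n)$ and visits from $\cR^n(\x)(t) \in I(\alpha_0,n)$ to their respective first returns to $I_\pm(n)$. By induction this equals $(Q_n)_{\beta,\alpha_1} + (Q_n)_{\beta,\alpha_0} = (Q_{n+1})_{\beta,\alpha_1}$.

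The symmetric case $\lambda_{\alpha_1}^{(n)} > \lambda_{\alpha_0}^{(n)}$ is handled identically with the roles of $\alpha_0$ and $\alpha_1$ exchanged. I expect the main technical point to be the precise geometric identification $\cR^n(\x)(I(\alpha_1, n+1)) = J_+$ in the second case, which rests on the width equality $|I(\alpha_1,n+1)| = \lambda_{\alpha_1}^{(n)} = |J_+|$ together with a brief case split on whether $\alpha_1$ is classical or orientation-reversing, tracking the location of its other end relative to $J_-$ on the interval $I_-$.
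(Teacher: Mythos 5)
Your overall strategy matches the paper's exactly: induct on $n$ using $Q_{n+1} = Q_n E_{n+1}$ and the identification of $\cR^{n+1}(\x)$ as the first return map of $\cR^n(\x)$ to $I_\pm(n+1)$. However, the justification for the first case ($\alpha \ne \alpha_1$) contains a genuine error. When $\alpha = \alpha_0$ and $t$ lies in the \emph{non-critical} end of $I(\alpha_0, n+1)$, the flow of $t$ along $\alpha_0$ lands precisely in the critical-position end of $\alpha_0$ on $I_+(n)$, so the claim that ``the other end lies away from the critical positions'' is false there. What actually saves you is not that the target end avoids the critical positions, but that the restriction $t \in I(\alpha_0, n+1)$ forces the flow image into the leftmost $\lambda_{\alpha_0}^{(n)} - \lambda_{\alpha_1}^{(n)}$ part of that critical end, which is disjoint from $J_+$; without articulating this, the case $\alpha = \alpha_0$ is unjustified.

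There is a second, smaller imprecision in the $\alpha = \alpha_1$ case: the parenthetical ``(the non-critical end of $\alpha_1$)'' suggests you are treating $I(\alpha_1, n+1)$ as a single interval, but it has two components of width $\lambda_{\alpha_1}^{(n)}$. One is the old non-critical end of $\alpha_1$ (for which your computation $\cR^n(\x)(t) \in J_+$ is correct), while the other is a subinterval of the old $\alpha_0$-end, for which $\cR^n(\x)(t)$ lands in $J_-$, not $J_+$. The orbit count comes out the same for both components, but both need to be checked, and your closing remark conjecturing $\cR^n(\x)(I(\alpha_1, n+1)) = J_+$ is incorrect for this reason (the image is $J_+ \cup J_-$). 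The paper sidesteps all of this by simply asserting the return-time identities $m_n(\alpha) = m_{n-1}(\alpha)$ and $m_n(\alpha_1) = m_{n-1}(\alpha_1) + m_{n-1}(\alpha_0)$ and the corresponding visit-count decompositions; your attempt to supply the geometric detail is in the right direction, but as written the supporting claims are not accurate.
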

\begin{proof}
The proof proceeds by induction. For $n=1$, if $\alpha$ does not split some other band in $\x \to \cR(\x)$, then $m_1(\alpha) = 1$. In the same situation, all $(\beta, \alpha)$ entries of $Q_1$ are zero and the $(\alpha, \alpha)$ entry is 1, verifying the lemma. If $\alpha$ splits $\beta$ in $\x \to \cR(\x)$, then $m_1(\alpha)  = 2$ on $I(\alpha, 1)$ and in fact $\#S_\alpha(1) \cap I(\beta) = 1$ which is the same as the $(\beta, \alpha)$ entry of $Q_1$, again verifying the lemma.

Now suppose that the lemma is true for $n-1$. If $\alpha$ does not split some other band in $\cR^{n-1}(\x) \to \cR^n(\x)$ then $m_n(\alpha) = m_{n-1}(\alpha)$ and in fact $\#S_\alpha(n) \cap I(\beta) = \#S_\alpha(n-1) \cap I(\beta)$ for all $\beta$. By induction $\#S_\alpha(n-1) \cap I(\beta)$ is the same as the $(\beta, \alpha)$ entry of $Q_{n-1}$, which is the same as the $(\beta, \alpha)$ entry of $Q_{n}$, verifying the lemma in this case. 

On the other hand, if $\alpha$ splits $\beta$ in $\cR^{n-1}(\x) \to \cR^n(\x)$, then $m_n(\alpha) = m_{n-1}(\alpha) + m_{n-1}(\beta)$ and in fact $\#S_\alpha(n) \cap I(\gamma) = \#S_\alpha(n-1) \cap I(\gamma) + \#S_\beta(n-1) \cap I(\gamma)$. By induction, the right hand side is the sum of the $(\gamma, \alpha)$ and $(\gamma, \beta)$ entry of $Q_{n-1}$ which is the same as $(\gamma, \alpha)$ entry of $Q_n$. So the lemma is verified in this case too.
\end{proof}

The sets $\bP Q_n (W(\pi_n))$ form a nested sequence in $W(\pi_0)$, all containing $\x$. Let 
\[
C(\x) = \bigcap_{n} \bP Q_{\x,n}(W(\pi_n)).
\]
Let $\mu$ be a probability measure on the disjoint union $I_+ \sqcup I_-$ invariant under the non-classical exchange $\x$. Let $(\lambda^\mu_\alpha)$ be the widths assigned by $\mu$ to the bands. \cite[Proposition 4.2]{Gad} shows that if $\x$ is minimal i.e., orbits of $\x$ are dense, then the map $\mu \to (\lambda^\mu_\alpha)$ is a linear homeomorphism from the set of $\x$-invariant probability measures onto the set $C(\x)$. 

Let $\mathbf{m}$ be the Lebesgue measure on each configuration space induced by the $(d-2)$-volume form on it as a codimension 1 submanifold of $\Delta$, normalized so that the total volume of the configuration space is 1. 
 
To estimate the measure of subsets of $\bP Q_n(W(\pi_n))$, we compare the push-forward $(\bP Q_n)_\ast (\mathbf{m})$ measure from $W(\pi_n)$ to $W(\pi_0)$ to the measure $\mathbf{m}$ on $W(\pi_0)$. The Radon-Nikodym derivative of $\mathbf{m}$ with respect to $(\bP Q_n)_\ast (\mathbf{m})$ is the Jacobian $\J(\bP Q_n)$ of the restriction $\bP Q_n: W(\pi_n) \to W(\pi_0)$. Integrating $\J(\bP Q_n)$ over the subset gives its measure. Thus, to give quantitative estimates, one needs to understand $\J(\bP Q_n)$ better.

Suppose $\pi_n$ is the same as $\pi_0$ at some stage $\jmath: \pi_0 \to \cdots \to \pi_n = \pi_0$, and suppose $\kappa$ is a finite splitting sequence starting from $\pi_0$. If $\J(\bP Q_\jmath)$ is roughly the same at all points, then the relative probability that $\kappa$ follows $\jmath$ is roughly equal to the probability that an expansion starts with $\kappa$. We make this precise below:
\begin{definition}\label{C-distortion}
Suppose $\jmath: \pi_0 \to \pi_1 \to \dotsc \to \pi_n$ is a stage and $Q_\jmath$ the associated matrix. For $C > 1$, we say that $\jmath$ is {\em $C$-uniformly distorted} if for all $\y, \y' \in W(\pi_n)$
\[
\frac{1}{C} \leqslant \frac{\J(\bP Q_\jmath)(\y)}{\J(\bP Q_\jmath)(\y')} \leqslant C.
\]
\end{definition}

\begin{remark}\label{distort-distribute}
The matrix $Q_\jmath$ is $C$-distributed if for all $\alpha, \beta \in \A$, the columns of $Q_\jmath$ satisfy 
\[
\frac{1}{C} < \frac{\vert Q_\jmath(\alpha) \vert}{ \vert Q_\jmath(\beta) \vert} < C.
\]
As shown by the analysis of $\J(\bP Q_\jmath)$ in Section 8 of \cite{Gad}, when the columns of $Q_\jmath$ are $C^{1/(d-1)}$-distributed then $\jmath$ is $C$-uniformly distorted. The relative probability statement then becomes:
\end{remark} 

\begin{lemma}\label{rel-prob}
Suppose $\jmath: \pi_0 \to \cdots \to \pi_n = \pi_0$ is $C$-distributed, and let $\kappa$ be a finite splitting sequence starting from $\pi_0$. Let $\jmath \ast \kappa$ denote the sequence $\jmath$ followed by $\kappa$. Then, there exists constant $c>1$ that depends only on $(C, d)$ such that
\[
\frac{1}{c} \mathbf{m}\left( \bP Q_\kappa(W(\pi_\kappa))\right) <  \frac{\mathbf{m}\left(\bP Q_{\jmath \ast \kappa}(W(\pi_{\jmath \ast \kappa}))\right)}{\mathbf{m}\left( \bP Q_\jmath (W(\pi_\jmath))\right)} < c \mathbf{m}\left( \bP Q_\kappa(W(\pi_\kappa))\right).\]
\end{lemma}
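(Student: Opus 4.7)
The plan is to apply the change-of-variables formula, factor the Jacobian along the decomposition $Q_{\jmath \ast \kappa} = Q_\jmath Q_\kappa$, and then use the $C$-uniform distortion of $\jmath$ to treat $\J(TQ_\jmath)$ as constant up to a multiplicative factor depending only on $C$.

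First I would observe that since $\pi_n = \pi_0$ the concatenation $\jmath \ast \kappa$ really is a path in $\cG$, with terminal node $\pi_{\jmath \ast \kappa} = \pi_\kappa$, so $W(\pi_{\jmath \ast \kappa}) = W(\pi_\kappa)$ and $Q_{\jmath \ast \kappa} = Q_\jmath Q_\kappa$. Consequently $TQ_{\jmath \ast \kappa} = TQ_\jmath \circ TQ_\kappa$ as projective maps, and the chain rule yields
\[
\J(TQ_{\jmath \ast \kappa})(\y) \;=\; \J(TQ_\jmath)\bigl(TQ_\kappa(\y)\bigr) \cdot \J(TQ_\kappa)(\y)
\]
for every $\y \in W(\pi_\kappa)$. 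The usual change-of-variables formula then rewrites the numerator and denominator of the claimed ratio as
\[
\mathbf{m}\bigl(TQ_{\jmath \ast \kappa}(W(\pi_{\jmath \ast \kappa}))\bigr) \;=\; \int_{W(\pi_\kappa)} \J(TQ_\jmath)\bigl(TQ_\kappa(\y)\bigr)\, \J(TQ_\kappa)(\y) \, d\mathbf{m}(\y),
\]
\[
\mathbf{m}\bigl(TQ_\jmath(W(\pi_\jmath))\bigr) \;=\; \int_{W(\pi_n)} \J(TQ_\jmath) \, d\mathbf{m}.
\]

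Next I would invoke Remark \ref{distort-distribute}: because $\jmath$ is $C$-distributed, $\J(TQ_\jmath)$ is constant to within a multiplicative factor depending only on $C$ across all of $W(\pi_0) = W(\pi_n)$. In the numerator, the inner points $TQ_\kappa(\y)$ lie in $TQ_\kappa(W(\pi_\kappa)) \subset W(\pi_0)$, so I can pull $\J(TQ_\jmath)\bigl(TQ_\kappa(\y)\bigr)$ out of the integral up to the error absorbed by $\approx$, leaving approximately $\J(TQ_\jmath)(\y_0)\cdot \mathbf{m}\bigl(TQ_\kappa(W(\pi_\kappa))\bigr)$ for some $\y_0 \in W(\pi_0)$. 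The same bound applied to the denominator gives approximately $\J(TQ_\jmath)(\y_1)\cdot \mathbf{m}(W(\pi_n)) = \J(TQ_\jmath)(\y_1)$, using the normalization $\mathbf{m}(W(\pi_n)) = 1$. Dividing and applying $C$-uniform distortion one last time to compare $\J(TQ_\jmath)(\y_0)$ with $\J(TQ_\jmath)(\y_1)$ produces the claimed asymptotic.

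The main difficulty is purely bookkeeping: the multiplicative error absorbed in $\approx$ must be tracked consistently across the numerator integral, the denominator integral, and the final quotient, and one should verify that every evaluation of $\J(TQ_\jmath)$ in these calculations occurs at a point of $W(\pi_0)$ so that Remark \ref{distort-distribute} applies uniformly. Since that remark already converts $C$-distribution of the columns of $Q_\jmath$ into $C^{1/(d-1)}$-style uniform distortion of $TQ_\jmath$, no genuinely new geometric estimate beyond the chain rule is needed.
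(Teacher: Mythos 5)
Your proof is correct and fills in exactly the argument the paper leaves implicit: the paper states Lemma~\ref{rel-prob} as an immediate consequence of Remark~\ref{distort-distribute} without writing out the change-of-variables computation, and your factorization of the Jacobian via $Q_{\jmath\ast\kappa}=Q_\jmath Q_\kappa$, followed by pulling the nearly-constant $\J(TQ_\jmath)$ out of both integrals, is precisely what is intended.

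One small bookkeeping note: the hypothesis is that $\jmath$ is $C$-\emph{distributed}, and Remark~\ref{distort-distribute} converts $C^{1/(d-1)}$-distribution into $C$-uniform distortion, so from $C$-distribution you get $C^{d-1}$-uniform distortion (consistent with the $C^{d-1}$ appearing in Fact 2 of Section 6). This only changes the constant absorbed by $\approx$ and does not affect the argument, but you should track the exponent if the lemma is later applied quantitatively.
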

\begin{notation}
At any stage $\jmath$, we shall say that two quantities $A_1 \asymp A_2$ if up to a multiplicative constant that depends only on $(C,d)$ and is independent of $\jmath$ they are the same. With this notation:
\[
\frac{\mathbf{m}\left(\bP Q_{\jmath \ast \kappa}(W(\pi_{\jmath \ast \kappa}))\right)}{\mathbf{m}\left( \bP Q_\jmath (W(\pi_\jmath))\right)} \asymp \mathbf{m}\left( \bP Q_\kappa(W(\pi_\kappa))\right).
\]
\end{notation}
\noindent The main technical theorem is \cite[Theorem 1.2]{Gad} which we reproduce below:

\begin{theorem}[Uniform Distortion]\label{Uniform-distortion}
Let $\jmath: \pi_0 \to \pi_1 \to \dotsc \to \pi_n$ be a stage. There exists a constant $C>1$, independent of $\jmath$, such that for almost every $\x \in \bP Q_\jmath(W(\pi_n))$, there is some $m >n$, depending on $\x$, such that $\jmath$ followed by $\pi_n \to \cdots \to \pi_{\x,m}$ is $C$-uniformly distorted. Additionally, $\pi_{\x, m}$ can be arranged to be the same as $\pi_0$.
\end{theorem}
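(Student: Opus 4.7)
The plan is to reduce $C$-uniform distortion to producing an extension with balanced column norms, to construct such extensions uniformly using the strong connectivity of $\cG$, and then to show by a recurrence argument that almost every expansion realizes one. By Remark \ref{distort-distribute}, it suffices to produce an extension $\jmath \ast \sigma$ ending at $\pi_0$ whose matrix $Q_{\jmath \ast \sigma}$ has columns $C^{1/(d-1)}$-distributed. For each node $\pi \in \cG$, I would use strong connectivity (Theorem \ref{attractors}) to construct a finite splitting sequence $\kappa_\pi: \pi \to \cdots \to \pi_0$ whose associated matrix $Q_{\kappa_\pi}$ is \emph{strictly positive}. Because each split replaces one column by the sum of two columns of the current matrix, looping around $\cG$ enough times mixes the columns until every column of the cumulative matrix receives positive contributions from every other, the matrix analog of the Perron--Frobenius positivity in the classical Rauzy--Veech setup. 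Since $\cG$ is finite, the entries of the finitely many $Q_{\kappa_\pi}$ are bounded uniformly above by some $M$ and below by some $c > 0$.

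Next, for any non-negative matrix $Q_\jmath$ and any $\pi$, every column of $Q_\jmath Q_{\kappa_\pi}$ is a positive combination of all columns of $Q_\jmath$ with coefficients in $[c,M]$, so for all $\alpha, \beta \in \A$
\[
\frac{c}{M} \;\leq\; \frac{|(Q_\jmath Q_{\kappa_\pi})(\alpha)|}{|(Q_\jmath Q_{\kappa_\pi})(\beta)|} \;\leq\; \frac{M}{c}.
\]
With $C = (M/c)^{d-1}$, Remark \ref{distort-distribute} then guarantees that the stage $\jmath \ast \kappa_{\pi_n}$ is $C$-uniformly distorted, and it ends at $\pi_0$ as required.

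It remains to show that almost every $\x \in TQ_\jmath(W_n)$ has an expansion of the form $\jmath \ast \tau \ast \kappa_\pi$ for some intermediate $\tau: \pi_n \to \pi$ in $\cG$. I would set this up as a Borel--Cantelli argument: at each stage $k \geq n$, let $A_k \subset TQ_\jmath(W_n)$ be the event that the next $\mathrm{length}(\kappa_{\pi_{\x,k}})$ splits of the expansion follow $\kappa_{\pi_{\x,k}}$ exactly. The hard part, and the main obstacle, is to obtain a uniform lower bound $\delta > 0$, independent of the initial segment, on the conditional measure of $A_k$ given the expansion through stage $k$. This bound must be extracted from the Jacobian calculus: one compares $\J(TQ_{\jmath \ast \tau \ast \kappa_\pi})$ with $\J(TQ_{\jmath \ast \tau})$ and shows the ratio integrates to a positive quantity bounded below uniformly over the finitely many $\pi \in \cG$. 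A Lebesgue density argument along the nested cylinders $TQ_{\jmath \ast \tau}(W(\pi))$ then forces the set of $\x$ with no $C$-distorted extension to have density zero at each of its points, hence measure zero, finishing the proof.
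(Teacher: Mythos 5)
The paper does not prove this theorem; it cites \cite[Theorem 1.2]{Gad} and only records the reduction (via Remark~\ref{distort-distribute}) to producing a $C$-distributed stage, so your first step is aligned with the paper. Your steps 2--4 are sound: for each $\pi\in\cG$ strong connectivity does allow one (looping enough times) to choose a loop $\kappa_\pi:\pi\to\cdots\to\pi_0$ with strictly positive matrix, the entries of the finitely many $Q_{\kappa_\pi}$ are uniformly bounded, and the computation showing that $Q_\jmath Q_{\kappa_\pi}$ is $(M/c)$-distributed regardless of $Q_\jmath$ is correct. The construction does produce, at any stage, a bounded-length continuation that terminates at $\pi_0$ and is $C$-uniformly distorted.

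The genuine gap is your step 5, and it cannot be fixed in the way you propose. The conditional probability that the next $\mathrm{length}(\kappa_{\pi_{\x,k}})$ splits follow $\kappa_{\pi_{\x,k}}$ is, by the formula $\J(TQ_{\jmath})(\y)=1/(a|Q_\jmath\y|^{d-1})$ from Fact 4, equal to
\[
\frac{\displaystyle\int_{TQ_{\kappa_\pi}(W(\pi_0))}\frac{d\mathbf{m}(\y)}{|Q_{\jmath'}\y|^{d-1}}}{\displaystyle\int_{W(\pi)}\frac{d\mathbf{m}(\y)}{|Q_{\jmath'}\y|^{d-1}}},
\]
where $\jmath'$ is the expansion through stage $k$. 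When $Q_{\jmath'}$ has one column of norm $N\gg 1$ and the rest of order $1$, the denominator is $O(1)$, while the numerator is $O(N^{-(d-1)})$ whenever $TQ_{\kappa_\pi}(W(\pi_0))$ sits near the vertex of the large column. Since the target set $TQ_{\kappa_\pi}(W(\pi_0))$ is fixed once and for all while $Q_{\jmath'}$ can become arbitrarily unbalanced in arbitrary directions, the conditional probability is \emph{not} bounded below uniformly, and neither the Borel--Cantelli nor the Lebesgue density argument applies. This failure is precisely the difficulty the theorem is designed to overcome; the proof in \cite{Gad}, following Kerckhoff \cite{Ker}, instead shows that from any stage a uniformly positive proportion of continuations strictly decreases a measure of column disbalance, and iterates this decay rather than waiting for a fixed target sequence.
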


Given Remark~\ref{distort-distribute}, to prove Theorem~\ref{Uniform-distortion} one in fact shows that there is some $m>n$ such that $\jmath$ followed by $\pi_n \to \cdots \to \pi_{\x,m}$ is $C$-distributed. 

A consequence of Theorem~\ref{Uniform-distortion} is \cite[Theorem 11.1]{Gad} which we reproduce below:

\begin{theorem}[Strong Normality]\label{Strong-Normality}
In almost every expansion, for any finite sequence $\jmath$ starting from $\pi_0$, there are infinitely many instances in which $\jmath$ immediately follows a $C$-distributed stage.
\end{theorem}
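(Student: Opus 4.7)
The approach is to use Theorem~\ref{Uniform-distortion} to produce, for almost every $\x$, an infinite sequence of stopping times at which the initial stage is $C$-uniformly distorted and has returned to $\pi_0$, and then invoke Lemma~\ref{rel-prob} together with the conditional (L\'evy) extension of the second Borel--Cantelli lemma to show that the prescribed sequence $\jmath$ immediately follows infinitely many of these stages. Since there are only countably many finite splitting sequences $\jmath$ from $\pi_0$, it suffices to prove the statement for a single $\jmath$ and then intersect the resulting full-measure sets over all $\jmath$.

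Fix $\jmath$ and set $p := \mathbf{m}(TQ_\jmath(W(\pi_\jmath))) > 0$. Define stopping times $\tau_0 = 0 < \tau_1 < \tau_2 < \cdots$ inductively: having defined $\tau_k$, let $\tau_{k+1}(\x)$ be the least integer $m > \tau_k(\x)$ such that the initial stage of $\x$ up to step $m$ is $C$-uniformly distorted and ends at $\pi_0$. Theorem~\ref{Uniform-distortion}, applied to the initial stage up to $\tau_k$, guarantees that $\tau_{k+1}$ is almost surely finite, so a.e.\ $\x$ admits the entire infinite sequence. Let $\eta_k(\x)$ denote the initial stage up to step $\tau_k$, and let $B_k$ be the event that the $|\jmath|$ splits immediately following $\tau_k$ realize $\jmath$.

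Let $\mathcal{F}_{\tau_k}$ be the $\sigma$-algebra generated by the expansion up to step $\tau_k$. On the atom $\{\eta_k = \eta\} = TQ_\eta(W(\pi_\eta))$, applying Lemma~\ref{rel-prob} with $\eta$ as the $C$-distributed stage and $\jmath$ as the continuation gives
\[
\frac{\mathbf{m}\bigl(\{\eta_k = \eta\} \cap B_k\bigr)}{\mathbf{m}\bigl(\{\eta_k = \eta\}\bigr)} \;=\; \frac{\mathbf{m}\bigl(TQ_{\eta\ast\jmath}(W(\pi_{\eta\ast\jmath}))\bigr)}{\mathbf{m}\bigl(TQ_\eta(W(\pi_\eta))\bigr)} \;\geq\; \frac{p}{C'}
\]
for a constant $C'$ depending only on $C$. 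Consequently $\sum_k \mathbf{m}(B_k \mid \mathcal{F}_{\tau_k}) = \infty$ almost surely, and L\'evy's conditional Borel--Cantelli lemma yields that $B_k$ holds for infinitely many $k$ a.s.---each such $k$ being an instance where $\jmath$ immediately follows the $C$-distributed stage $\eta_k$.

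The main obstacle is making the multiplicative constant absorbed by the $\approx$ in Lemma~\ref{rel-prob} genuinely uniform in $\eta_k$, so that $p/C'$ provides a uniform positive lower bound for the conditional probabilities. This follows from Definition~\ref{C-distortion}: on any $C$-uniformly distorted stage $\eta$, the Jacobian $\J(TQ_\eta)$ varies by a factor at most $C$ across $W(\pi_\eta)$, and integrating this bound yields a two-sided sandwich of the displayed ratio by $p$, up to a factor of $C^2$, independent of $\eta$. With this explicit uniformity the Borel--Cantelli step is clean, and the argument is complete after intersecting over the countably many $\jmath$.
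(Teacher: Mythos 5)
Your argument is essentially correct and follows the route the paper itself indicates (the paper cites \cite[Theorem 11.1]{Gad} as a consequence of Theorem~\ref{Uniform-distortion}, and your derivation from Theorem~\ref{Uniform-distortion} plus Lemma~\ref{rel-prob} via a conditional Borel--Cantelli is the natural one). The uniformity point you flag is handled correctly: $C$-uniform distortion controls the Jacobian ratio directly, so the implicit constant in Lemma~\ref{rel-prob} depends only on $C$, giving a uniform positive lower bound for the conditional probabilities.

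One small technical gap remains in the Borel--Cantelli setup. For L\'evy's extension you need the events $B_k$ adapted to the stopping-time filtration, i.e.\ $B_k \in \mathcal{F}_{\tau_{k+1}}$, which requires $\tau_{k+1} > \tau_k + |\jmath|$. As you have defined $\tau_{k+1}$ (the first $C$-distorted return to $\pi_0$ strictly after $\tau_k$), this inequality can fail: a short $C$-distorted return to $\pi_0$ could occur inside the block of length $|\jmath|$ following $\tau_k$, in which case $B_k$ is not determined by the expansion up to $\tau_{k+1}$ and the conditional Borel--Cantelli machinery does not directly apply. The fix is trivial: redefine $\tau_{k+1}$ as the least $m > \tau_k + |\jmath|$ at which the initial stage is $C$-uniformly distorted and returns to $\pi_0$; Theorem~\ref{Uniform-distortion} (applied to the stage up to $\tau_k + |\jmath|$) still makes $\tau_{k+1}$ almost surely finite, the conditional lower bound on $\mathbf{m}(B_k \mid \mathcal{F}_{\tau_k})$ is unchanged, and now $B_k \in \mathcal{F}_{\tau_{k+1}}$, so L\'evy's Borel--Cantelli applies cleanly. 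With that repair, intersecting over the countably many $\jmath$ completes the proof as you describe.
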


\section{Cyclic approximation}\label{cyc approx}

Let $\ell$ denotes the Lebesgue measure on $I_+ \sqcup I_-$. Let $\pi \in \cG$. Let $\x \in W(\pi)$ be a non-classical exchange.

\begin{theorem}[Cyclic approximation]\label{cyclic}
For almost every set of widths $\x \in W(\pi)$, and any $\delta > 0$ small, 
there is a positive integers $N, n$, a band $\alpha \in \A$, such that for some $J = I(\alpha, n)$
\begin{enumerate}
\item $J \cap \x^k (J) = \varnothing$ for all $1< k < N$.
\item $\x$ is linear on the set $\x^k (J)$ for all $1< k < N$.
\item 
\[
\ell \left( \bigcup_{k = 0}^{N-1} \x^k (J) \right) > 1- \delta.
\]
\item $\ell(J \cap \x^N (J)) > (1 - \delta)\ell(J).$
\end{enumerate}
Moreover, $\alpha$ is orientation preserving in $\pi_n$. 
\end{theorem}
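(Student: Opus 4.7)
The plan is to adapt Veech's cyclic approximation for classical exchanges to the present setting, using Strong Normality (Theorem~\ref{Strong-Normality}) and Uniform Distortion (Theorem~\ref{Uniform-distortion}). Let $\alpha$ be an orientation preserving band of $\pi$, which exists by hypothesis. For $\epsilon>0$ to be fixed later in terms of $\delta$, the dimension $d$, and the distortion constant $C$, let $U=\{\mathbf{y}\in W(\pi):y_\alpha>1-\epsilon\}$ denote the open neighborhood in $W(\pi)$ of the vertex $e_\alpha$. The aim is to exhibit, for almost every $\x$, arbitrarily large $n$ for which (a)~$\pi_n=\pi$, (b)~the path $\jmath\colon\pi\to\cdots\to\pi_n$ is $C$-uniformly distorted, and (c)~the renormalized induced widths $\tilde\lambda^{(n)}=\lambda^{(n)}/|\lambda^{(n)}|$ lie in $U$. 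Given such an $n$, I take $J=I(\alpha,n)$ and $N=m_n(\alpha)=|Q_n(\cdot,\alpha)|$; the concluding property ($\alpha$ orientation preserving in $\pi_n$) is then immediate since $\pi_n=\pi$.

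The key step, and the principal obstacle of the proof, is a combinatorial lemma: exhibit a finite Rauzy loop $\kappa$ at $\pi$ in $\cG$ whose image $TQ_\kappa(W(\pi))$ lies inside $U$. Strong connectivity of $\cG$ (Theorem~\ref{attractors}) permits returns to $\pi$, and by iterating splits that grow every entry of the $\alpha$-row of $Q$ --- for instance by arranging $\alpha$ to play the role of winner from the $I_+$ critical position often enough --- one can make every column of $Q_\kappa$ dominant in its $\alpha$-entry, which forces each vertex of $TQ_\kappa(W(\pi))$ to sit near $e_\alpha$. Once $\kappa$ is constructed, Theorem~\ref{Strong-Normality} applied to $\kappa$ guarantees that for almost every $\x$ there are infinitely many $n$ at which the first $n$ splits of $\x$ factor as a $C$-uniformly distorted stage followed by $\kappa$; at any such $n$, conditions (a)--(c) hold.

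Properties (1) and (2) of the theorem are automatic from the Rokhlin tower structure of the first return of $\x$ to $I_+(n)\sqcup I_-(n)$: the levels $\x^k(J)$ for $0\leq k<N$ are pairwise disjoint, and the constancy of the return time on $J$ (intrinsic to Rauzy induction) forces each level with $0<k<N$ to sit inside a single band-end $I(\beta^{(k)})$ of the original partition, on which $\x$ acts as a translation. For (3), the identity $\sum_\gamma m_n(\gamma)\lambda_\gamma^{(n)}=1$ gives $\ell(\bigcup_k\x^k(J))=2\,m_n(\alpha)\lambda_\alpha^{(n)}=2\bigl(1-\sum_{\gamma\neq\alpha}m_n(\gamma)\lambda_\gamma^{(n)}\bigr)$, and $C$-distribution (Remark~\ref{distort-distribute}) bounds $m_n(\gamma)/m_n(\alpha)\leq C^{1/(d-1)}$, while $\tilde\lambda^{(n)}\in U$ yields $\sum_{\gamma\neq\alpha}\tilde\lambda_\gamma^{(n)}<\epsilon$, so the tail sum is $O(\epsilon)$ in $C,d$. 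For (4), since $\alpha$ is orientation preserving in $\pi_n=\pi$, a direct calculation with horizontal positions shows that the image $\cR^n(\x)(J_1)=\epsilon(J_2)$ of the $I_+$-end $J_1$ of $\alpha$ lies on $I_+$ at horizontal position differing from that of $J_1$ by at most $\sum_{\beta\neq\alpha}\lambda_\beta^{(n)}<\epsilon|\lambda^{(n)}|$, which is $O(\epsilon)$ times $\lambda_\alpha^{(n)}$; similarly for $J_2$ on $I_-$. Therefore $\ell(J\cap\x^N(J))\geq(1-O(\epsilon))\,\ell(J)$. Taking $\epsilon$ small enough in $\delta,C,d$ gives all four conclusions simultaneously.
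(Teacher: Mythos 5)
Your plan follows the paper's overall architecture (use Uniform Distortion and Strong Normality to locate $C$-distributed loops along which the normalized induced widths concentrate near the vertex $e_\alpha$, then read off the Rokhlin tower over $I(\alpha,n)$ and make the quantitative estimates), and your handling of properties (1)--(4) is essentially the same arithmetic as the paper's Steps 3--5. The difference, and the problem, lies in how you get the concentration.

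The paper never needs, and does not prove, the combinatorial lemma you flag as ``the principal obstacle'': that there is a fixed Rauzy loop $\kappa$ at $\pi$ with the \emph{entire} image $TQ_\kappa(W(\pi))$ inside $U = \{y_\alpha > 1-\epsilon\}$. That would require every column of $Q_\kappa$ to be $\alpha$-dominant, which is a strong constraint. Your suggested mechanism --- ``arranging $\alpha$ to play the role of winner from the $I_+$ critical position often enough'' --- is not something you get to dictate in the Rauzy diagram: which band appears in the critical position on $I_-$ is determined by the previous splits and the generalized permutation, and the paper explicitly warns (Section~\ref{Rauzy-diagram}) that the non-classical Rauzy diagram lacks the recurrence structure of the classical one. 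Nor does the obvious alternative (iterate a positive loop and appeal to Perron--Frobenius) help, since the columns of $Q_\kappa^m$ then align with the Perron direction, not with $e_\alpha$. So this lemma is a genuine gap, not a deferred routine verification.

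The paper sidesteps this entirely. It only asks for a $C$-distributed stage $\jmath : \pi \to \cdots \to \pi'$ with $Q_\jmath$ positive and $\alpha$ orientation preserving in $\pi'$, and then considers the positive-measure \emph{subset} $W(\pi',\alpha,\xi) = \{\lambda_\alpha > (1-\xi)|\lambda|\} \subset W(\pi')$. This subset is nonempty precisely because $\alpha$ is orientation preserving (so $e_\alpha$ satisfies the switch condition and lies in $W(\pi')$) --- this is where the orientation-preserving hypothesis actually enters. Strong Normality then places almost every $\x$ in some $TQ_\kappa(TQ_\jmath(W(\pi',\alpha,\xi)))$, with $\kappa$ a $C$-distributed loop that need not itself have any special image. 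No combinatorial construction of a distinguished loop is required. I'd recommend replacing your $U \supset TQ_\kappa(W(\pi))$ step with the paper's positive-measure-subset argument; the rest of your sketch then goes through.
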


\begin{proof}
 We simply check that the individual steps in the original proof by Veech \cite[Theorem 1.4]{Vee2} hold for non-classical exchanges. 
\subsubsection*{Step 1:} As a consequence of Theorem~\ref{Uniform-distortion}, there is a $C$-distributed stage $\jmath: \pi \to \pi_1 \to \cdots \to \pi_m= \pi'$ such that the matrix $Q_\jmath$ is positive and some band $\alpha$ is orientation preserving in $\pi'$.

\subsubsection*{Step 2:}
Given a constant $0 < \xi < 1$, let $W(\pi', \alpha, \xi)$ be the subset of $W(\pi')$ satisfying $\lambda_\alpha > (1- \xi)  \vert \lambda \vert$. Because $\alpha$ is orientation preserving in $\pi'$, the subset $W(\pi', \alpha, \xi)$ is non-empty and $\mathbf{m}(W(\pi', \alpha, \xi))/ \mathbf{m}(W(\pi')) \asymp \xi^{d-2}$. Consider $W = \bP Q_\jmath(W(\pi', \alpha, \xi))$. By Strong Normality, for almost every $\x \in W(\pi)$ there is a finite splitting sequence $\kappa: \pi \to \cdots \to \pi$ starting and terminating in $\pi$ and depending on $\x$ such that $\bP Q_\kappa(W)$ contains $\x$. Hence, the expansion of $\x$ begins with the concatenation $\kappa * \jmath $ i.e., $\kappa$ followed by $\jmath$. Thus, $Q_{\kappa * \jmath} = Q_\kappa Q_\jmath$. 

\subsubsection*{Step 3:} Let $n$ be the length as a directed path in $\G$ of the splitting sequence $\kappa * \jmath$. Corresponding to the sequence $\kappa * \jmath$, let $\y = \mathcal{R}^n(\x)$ be the exchange induced on corresponding subintervals $I_\pm(n)$ of $I_\pm$. Assuming $\alpha$ is orientation preserving, the widths for $\y$ satisfy
\[
\lambda^{(n)}_\alpha > (1- \xi) \vert \lambda^{(n)} \vert.
\]
In other words,
\[
\ell(I(\alpha, n)) > (1- \xi) \ell(I_+(n) \sqcup I_-(n)).
\] 
Recall from Lemma~\ref{matrix-elements} that the $(\beta, \alpha)$ entry of the matrix $Q_{\kappa * \jmath}$ counts the number of times a point in $I(\alpha, n)$ visits $I(\beta)$ under the exchange $\x$, before returning to $I_+(n) \sqcup I_-(n)$. Hence, for $1< k < q = \vert Q_{\kappa * \jmath} (\alpha) \vert$, we have $I(\alpha, n) \cap \x^k (I(\alpha, n)) = \varnothing$. Finally, $\x^q (I(\alpha, n)) \subset I_+(n) \sqcup I_-(n)$. This implies
\[
\ell \left( I(\alpha, n) \cap \x^q (I(\alpha, n)) \right) > (1-2\xi) \ell \left( I_+(n) \sqcup I_-(n) \right).
\] 
\subsubsection*{Step 4:} 
For a positive $d \times d$ matrix $Q$, let
\[
\nu(Q) = \max_{1 \leqslant i, j, k \leqslant d} \frac{Q_{ij}}{Q_{ik}}.
\]
Recalling inequalities 3.1 and 3.2 from \cite{Vee2}, we get
\begin{eqnarray*}
\vert Q_{\kappa * \jmath}(\beta) \vert &\leqslant& q \nu(Q_{\kappa * \jmath}).\\ \nu(Q_{\kappa * \jmath}) &\leqslant& \nu(Q_\jmath).
\end{eqnarray*}
Let $(\lambda_\gamma) = \x$ be the original widths. Assuming $\alpha$ is orientation preserving, we use $\lambda = Q_{\kappa * \jmath} \lambda^{(n)}$ to get
\begin{eqnarray*}
\vert \lambda \vert  - \lambda^{(n)}_\alpha q &=& \sum_{\beta \neq \alpha} \lambda^{(n)}_\beta \vert Q_{\kappa * \jmath}(\beta) \vert \\
&\leqslant& q \nu(Q_{\kappa * \jmath}) \sum_{\beta \neq \alpha} \lambda^{(n)}_\beta\\
&<& q\nu(Q_{\kappa * \jmath}) \frac{\xi}{1-\xi} \lambda^{(n)}_\alpha \\
&<& \nu(Q_{\kappa * \jmath})\frac{\xi}{1-\xi} \vert \lambda \vert.
\end{eqnarray*}
Rearranging the inequality above, we get
\[
\ell \left( \bigcup_{k=0}^{q-1} \x^k (I(\alpha, n)) \right) > \left( 1- \nu(Q_{\jmath * \kappa} ) \frac{\xi}{1- \xi} \right) \vert \lambda \vert.
\]
\subsubsection*{Step 5:} 
Finally, choose $\xi > 0$ small enough such that $2 \xi < \delta$ and $\nu(Q_{\kappa * \jmath})( \delta/1-\delta) < \xi$. Then, properties (1)-(4) in the theorem hold with $N= q$ and $J= I(\alpha,n)$. 
\end{proof}

\begin{remark}
A main point in the proof above is that $\alpha$ is orientation preserving in $\pi' = \cR^n(\pi)$. This implies that $J = I(\alpha, n)$ has a component each in $I_\pm$. This shall turn out to be relevant later in Section 7.
\end{remark}

\section{Rigidity sequences}

\noindent Following \cite{Cha},  a positive integer $n$ is a $\xi$-{\em rigidity time} for a non-classical exchange $\x \in W(\pi)$ if 
\[
\int \limits_{I_+ \sqcup I_-} \vert \x^n(t) - t \vert d\ell < \xi
\]
In particular, a consequence of Theorem~\ref{cyclic} is that for every $\xi>0$ and for almost every $\x \in W(\pi)$ there is a $\xi$-rigidity time. 

A sequence of positive integers $n_1, n_2, \cdots$ is a {\em rigidity sequence} for a non-classical exchange $\x$ if 
\[
\int \limits_{I_+ \sqcup I_-} \vert \x^{n_i}(t) - t \vert d\ell  \to 0
\]
For a sequence of natural numbers $A$, let $a(n)$ be the cardinality of $A \cap \{ 1, \cdots , n\}$. The sequence $A$ is {\em density} 1 if $\lim a(n)/n = 1$. 

To prove disjointness it suffices to show \cite[Remark 9]{Cha} that 
\begin{enumerate}
\item Any sequence of density 1 contains a rigidity sequence for almost every non-classical exchange.
\item For any $\alpha$ we have $\bf{m} (\{\x \in W(\pi): e^{2 \pi i \alpha} \text{ is an eigenvalue of }\x\})=0$.
\end{enumerate}
\begin{theorem}
Let $A$ be a sequence of natural numbers with density 1. Almost every non-classical exchange $\x \in W(\pi)$ has a rigidity sequence in $A$.
\end{theorem}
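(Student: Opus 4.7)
The plan is to use cyclic approximation (Theorem~\ref{cyclic}) and Strong Normality (Theorem~\ref{Strong-Normality}) to produce many $\xi$-rigidity times for almost every $\x$, then exploit the density-$1$ hypothesis on $A$ to locate one of them in $A$. Fix $\xi > 0$ (ultimately sent to $0$ by a diagonal argument) and a large positive integer $C$. Choosing $\delta > 0$ sufficiently small, conclusions (3)-(4) of Theorem~\ref{cyclic} together with the forced smallness of $\ell(J)$ yield $\int_{I_+ \sqcup I_-} |\x^N(t) - t|\, d\ell < \xi/(2C)$ for the produced tower height $N$. Iterating via Theorem~\ref{Strong-Normality}, for a.e. $\x \in W(\pi)$ we obtain infinitely many $C$-distributed stages $\jmath_k$ in the expansion of $\x$, each producing a tower height $N_k = |Q_{\jmath_k}(\alpha_k)|$ (with $\alpha_k$ an orientation-preserving band) satisfying $\int |\x^{N_k}(t) - t|\, d\ell < \xi/(2C)$. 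By the triangle inequality, for every $j \in \{1, 2, \dotsc, C\}$,
\[
\int_{I_+ \sqcup I_-} |\x^{j N_k}(t) - t|\, d\ell \leq j \cdot \frac{\xi}{2C} \leq \frac{\xi}{2} < \xi,
\]
so every element of $S(N_k) := \{N_k, 2 N_k, \dotsc, C N_k\}$ is a $\xi$-rigidity time of $\x$.

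Define the \emph{bad set}
\[
B_C := \{N \in \N : S(N) \cap A = \varnothing\}.
\]
Taking $j = 1$ forces $N \in A^c$ for any $N \in B_C$, so $B_C \subseteq A^c$ and $|B_C \cap [1,M]| \leq |A^c \cap [1,M]| = o(M)$ by density-$1$ of $A$; hence $B_C$ has density zero in $\N$. The problem reduces to showing that for a.e. $\x$ some $N_k(\x) \notin B_C$, for then some $j N_k(\x) \in A$ gives a $\xi$-rigidity time lying in $A$.

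The main obstacle is excluding $\{N_k(\x)\}_k \subseteq B_C$. The strategy is to condition on the first $k-1$ stages $\jmath_1, \dotsc, \jmath_{k-1}$ and view $N_k$ as a function of $\x$ on the cell $T Q_{\jmath_{k-1}}(W(\pi_{\jmath_{k-1}}))$; the subdivision of this cell according to the next $C$-distributed stage produces sub-cells with differing tower heights $N_k$, weighted by measures controlled via Lemma~\ref{rel-prob} and Theorem~\ref{Uniform-distortion}. The key quantitative statement required is that the push-forward of $\mathbf{m}$ under $\x \mapsto N_k(\x)$ assigns $o(1)$ mass, as the scale $M \to \infty$, to any density-zero subset of $\N \cap [1,M]$. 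Combined with a Borel-Cantelli-type argument across $k$, this forces $\mathbf{m}(\{\x : N_k(\x) \in B_C \text{ for all } k\}) = 0$.

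Finally, a diagonal argument over $\xi_m = 1/m$ assembles the rigidity sequence: for a.e. $\x$ and each $m$, choose $n_m \in A$ inductively with $n_m > n_{m-1}$ and $\int |\x^{n_m}(t) - t|\, d\ell < 1/m$. The sequence $\{n_m\}$ lies in $A$, tends to infinity, and satisfies $\int |\x^{n_m}(t) - t|\, d\ell \to 0$, so it is the desired rigidity sequence for $\x$ in $A$. The delicate quantitative equidistribution of tower heights highlighted in the obstacle paragraph is the technical heart of the proof.
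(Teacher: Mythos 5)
Your reduction is clean in outline and the triangle-inequality boost (a $\xi/(2C)$-rigidity time $N$ automatically yields $\xi$-rigidity times $N, 2N, \dotsc, CN$) is a genuine observation not present in the paper. But the argument has a real gap exactly where you flag ``the technical heart'': you assert that the push-forward of $\mathbf{m}$ under $\x \mapsto N_k(\x)$ assigns $o(1)$ mass to any density-zero subset, and that a Borel--Cantelli argument across $k$ then forces $\mathbf{m}(\{\x : N_k(\x) \in B_C \text{ for all } k\}) = 0$, without proving either. Nothing in what you have written prevents the tower heights $N_k(\x)$ from all landing in a fixed density-zero set (say, the powers of $2$), nor do you establish any independence/mixing across $k$ that a Borel--Cantelli argument would require. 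This unproved equidistribution-of-heights statement is precisely the content the paper supplies.

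The paper takes a different structural route: it first reduces the statement to a quantitative Theorem~\ref{rigid} (for every $\xi > 0$ there is a density threshold $d(\xi) < 1$ so that density-$d(\xi)$ sets contain rigidity sequences off an exceptional set of measure $\xi$), and then proves that by transplanting the counting estimates of \cite{Cha}, Corollary 3 via Facts 1--5, which rely on \cite{Gad} for distortion (Fact~1 and~2), on counting maximal columns with norm $\approx R$ restricted to configuration spaces (Fact~3), on the Jacobian formula $\J(TQ)(\y) = 1/(a\vert Q\y\vert^{d-1})$ (Fact~4), and on a positive-lower-density statement for rigidity-time dyadic scales (Fact~5). Together these give $\mathbf{m}(\x : R \text{ is an expected } \xi\text{-rigidity time}) \approx \xi^{d-2}/R$, which is the quantitative distribution you need but do not derive. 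Two further remarks: (i) your $B_C$ bound is only $B_C \subseteq A^c$ — the $C=1$ inclusion — so the multiply-by-$j$ parameter $C$ does no provable work, since density 1 of $A$ alone does not control the joint placement of $N, 2N, \dotsc, CN$ relative to $A$; (ii) a careful version of your approach would still need essentially Facts 3--5 to make the measure estimate on $\{\x : N_k(\x) \in B_C\}$ summable, at which point you would have recovered the paper's route without the intermediate Theorem~\ref{rigid}.
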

The above theorem follows from:
\begin{theorem}\label{rigid}For every $\xi>0$ there exists $d(\xi)<1$ such that any sequence of density $d(\xi)$ contains a rigidity sequence for a set of non-classical exchange of measure $1-\xi$.
\end{theorem}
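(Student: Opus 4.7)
The plan follows the strategy of \cite{Cha}, now powered by the cyclic approximation (Theorem~\ref{cyclic}), the uniform distortion theorem (Theorem~\ref{Uniform-distortion}), and strong normality (Theorem~\ref{Strong-Normality}) established in the previous sections.

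First I would carry out a Borel--Cantelli-style reduction to a single-scale estimate. Concretely, it suffices to show that for every $\eta > 0$ there exists $d'(\eta) < 1$ such that for any sequence $A$ of density at least $d'(\eta)$, the set
\[
E_\eta(A) = \{ \x \in W(\pi) : A \text{ contains an } \eta\text{-rigidity time of } \x \}
\]
has $\mathbf{m}$-measure at least $1 - \eta$. Given this, pick $\eta_k \downarrow 0$ with $\sum_k \eta_k < \xi$ and set $d(\xi) = \sup_k d'(\eta_k)$. For any $A$ of density at least $d(\xi)$, the intersection $\bigcap_k E_{\eta_k}(A)$ has measure at least $1 - \xi$, and every $\x$ in it admits a rigidity sequence in $A$ of quality $\eta_k \to 0$. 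The non-trivial point is that $d'(\eta)$ must not tend to $1$ as $\eta \to 0$.

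For the single-scale statement, fix $\eta > 0$. By the proof of Theorem~\ref{cyclic}, there exist a $C$-distributed stage $\jmath: \pi \to \cdots \to \pi$ with $Q_\jmath$ positive, an orientation-preserving band $\alpha$ in the terminal permutation, and a threshold $\xi' = \xi'(\eta)$ such that any $\x$ whose expansion begins with $\jmath$ followed by entry into the slice $W(\pi, \alpha, \xi')$ has $|Q_{\jmath}(\alpha)|$ as an $\eta$-rigidity time. Strong normality then gives, for almost every $\x$, an infinite nested sequence of such returns $\jmath_1 \prec \jmath_2 \prec \cdots$, producing a sequence of $\eta$-rigidity times $q_i(\x) = |Q_{\jmath_i}(\alpha)|$ that is constant on the refining cylinders $TQ_{\jmath_i}(W)$. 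By Remark~\ref{distort-distribute} and Lemma~\ref{rel-prob}, the conditional measure of each child cylinder inside its parent is comparable, up to a universal factor, to the $\mathbf{m}$-measure of the extending Rauzy path.

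The main obstacle is a spread estimate: one must bound, conditional on $\x$ lying in a given parent cylinder $TQ_{\jmath_i}(W)$, the measure of children $TQ_{\jmath_{i+1}}(W)$ whose rigidity time $q_{i+1}(\x)$ falls outside $A$ by $C_1(1-d)$, with $C_1$ independent of both $\eta$ and $i$. Granted this, iterating over $K$ levels yields
\[
\mathbf{m}\big(\{\x : q_i(\x) \notin A \text{ for all } i \leq K\}\big) \leq (C_1(1-d))^K,
\]
so fixing $d'(\eta) = d^*$ once and for all with $C_1(1-d^*) < 1/2$ and then $K = K(\eta)$ with $2^{-K} < \eta$ completes the argument (and in fact $d(\xi) = d^*$ ends up independent of $\xi$). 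The spread estimate itself should follow from a counting argument on the Rauzy diagram: at each level, the one-step extensions of $\jmath_i$ produce sufficiently many distinct values of $|Q(\alpha)|$, each arising in children of controlled measure. Uniform distortion renders this quantification insensitive to $\eta$ and $i$, since the distortion constant $C$ in Theorem~\ref{Uniform-distortion} is stage-independent. Verifying the spread estimate with a stage-independent $C_1$ is the technical heart of the proof.
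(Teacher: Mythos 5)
The high-level outline is sensible and points in the same general direction as \cite[Corollary 3]{Cha}, which the paper's proof follows essentially verbatim. But you leave unproven exactly the step you correctly identify as ``the technical heart'' --- the spread estimate --- and you do not supply the specific ingredients the paper invokes to make that step go. The paper's proof is organized around five facts verified in \cite{Gad}: bounded growth of column norms after a bounded number of future $C$-distributed stages (\cite[Proposition 10.21]{Gad}); the Jacobian comparison from Remark~\ref{distort-distribute}; a count of $O(R^{d-2})$ for the number of possible maximal columns with $|Q(\alpha)| \approx R$ in a $C$-distributed stage (which rests on \cite[Lemma 12.2]{Gad} and the constraint that $TQ$ maps configuration spaces to configuration spaces); the measure estimate $\mathbf{m}(TQ(W(\pi'))) \approx R^{-(d-1)}$; and the resulting density statement that, for almost every $\x$, the set of dyadic scales $[2^i, 2^{i+1}]$ in which $\x$ admits a $\xi$-rigidity time with fixed previous induction steps has density $\gtrsim \xi^{d-2}$. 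Your proposal mentions none of these: in particular the norm-growth control of Fact 1 and the dyadic-scale density of Fact 5 are entirely absent, and the counting estimate that would make your per-level ``children with $q_{i+1} \notin A$'' bound $C_1(1-d)$ work is precisely Facts 3--4, which you only assert ``should follow from a counting argument.''

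Two more specific worries. First, your clause ``the one-step extensions of $\jmath_i$ produce sufficiently many distinct values of $|Q(\alpha)|$'' is not how the distribution of return times is controlled: the relevant stages are returns to a $C$-distributed configuration, not single Rauzy steps, and the multiplicity of a given $R$ among such stages is what Fact~3 bounds. Second, your parenthetical ``in fact $d(\xi) = d^*$ ends up independent of $\xi$'' is not supported by the paper's estimates: Fact~5 yields a scale density $\gtrsim \xi^{d-2}$ that degrades as the required rigidity quality improves, so the stage-independence of $C$ in Theorem~\ref{Uniform-distortion} does not by itself deliver an $\eta$-independent $C_1$. The theorem as stated allows $d(\xi) \to 1$, and the proof in \cite{Cha} is built so that this is enough; insisting on a uniform $d^*$ is both unnecessary and unjustified here. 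To close the gap you would need to supply Facts~1 and~3--5 (or cite them from \cite{Gad}) and run the actual iteration of \cite[Corollary 3]{Cha} with the dyadic-block decomposition, rather than the nested-cylinder scheme you sketch.
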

As a corollary to Theorem \ref{rigid}, we get
\begin{corollary}\label{irrat eigen} For any irrational $\alpha$ we have $\bf{m} (\{\x \in W(\pi): e^{2 \pi i \alpha} \text{ is an eigenvalue of }\x\})=0$.
\end{corollary}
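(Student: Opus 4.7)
The plan is to combine Theorem~\ref{rigid} with the classical fact that a non-trivial $L^2$ eigenvalue on the circle is incompatible with rigidity along any sequence that avoids a neighbourhood of $1 \in S^1$. Write $\alpha = e^{2\pi i \theta}$ with $\theta \in \R \setminus \Q$.

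First I would show: if $(n_i)$ is a rigidity sequence for a non-classical exchange $\x$ and $\alpha$ is an eigenvalue of $\x$ with $L^2$ eigenfunction $f \neq 0$ (so $f \circ \x = \alpha f$), then $\alpha^{n_i} \to 1$. This is a standard three-$\epsilon$ argument: the rigidity bound $\int |\x^{n_i}(t) - t|\, d\ell \to 0$ forces $\x^{n_i} \to \mathrm{id}$ in measure on the compact domain $I_+ \sqcup I_-$, so for any continuous $g$ the dominated convergence theorem gives $\|g \circ \x^{n_i} - g\|_2 \to 0$. Approximating $f$ in $L^2(\ell)$ by continuous functions and using $f \circ \x^{n_i} = \alpha^{n_i} f$ yields $\alpha^{n_i} f \to f$ in $L^2$; since $f \neq 0$, we conclude $\alpha^{n_i} \to 1$.

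Next, by Weyl equidistribution of $\{n\theta\}_{n \geq 1}$, for every $\epsilon > 0$ the set
\[
A_\epsilon := \bigl\{ n \in \N : |\alpha^n - 1| \geq \epsilon \bigr\}
\]
has natural density $1 - \tfrac{2}{\pi}\arcsin(\epsilon/2)$, which tends to $1$ as $\epsilon \downarrow 0$. Given $\xi > 0$, take $d(\xi) < 1$ from Theorem~\ref{rigid} and choose $\epsilon$ small enough that the density of $A_\epsilon$ exceeds $d(\xi)$. Applying Theorem~\ref{rigid} to $A_\epsilon$ produces $B_\xi \subset W(\pi)$ with $\mathbf{m}(B_\xi) \geq 1 - \xi$ such that every $\x \in B_\xi$ admits a rigidity sequence $(n_i) \subset A_\epsilon$. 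For such $\x$, if $\alpha$ were an eigenvalue then Step~1 would give $\alpha^{n_i} \to 1$, contradicting $|\alpha^{n_i} - 1| \geq \epsilon$. Hence the set of $\x \in W(\pi)$ having $\alpha$ as an eigenvalue has measure at most $\xi$, and letting $\xi \to 0$ concludes the proof.

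The only non-routine step is the $L^2$-approximation in the first paragraph, and it is a soft check: continuous functions are dense in $L^2(I_+ \sqcup I_-, \ell)$ and uniformly continuous on this compact domain, so the dominated convergence argument goes through the moment $\x^{n_i}(t) \to t$ in measure, which is exactly what the rigidity hypothesis delivers. No new technical ingredient beyond Theorem~\ref{rigid} and the standard equidistribution estimate is required.
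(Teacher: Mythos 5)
Your proposal is correct, and it reconstructs in full the argument that the paper (tersely) delegates to \cite[Corollary 5]{Cha}: rigidity along a sequence $(n_i)$ forces $\alpha^{n_i}\to 1$ for any $L^2$-eigenvalue $\alpha$; for $\alpha$ not a root of unity, Weyl equidistribution gives the set $A_\epsilon=\{n:|\alpha^n-1|\geq\epsilon\}$ density tending to $1$ as $\epsilon\downarrow 0$; and Theorem~\ref{rigid} then forces, for each $\xi>0$, a set of $\x$ of measure $\geq 1-\xi$ admitting rigidity sequences inside $A_\epsilon$, which for such $\x$ rules out $\alpha$ as an eigenvalue. The three-$\epsilon$ step is sound: rigidity gives convergence in measure of $\x^{n_i}$ to the identity on the compact domain $I_+\sqcup I_-$, continuous functions are uniformly continuous there and dense in $L^2(\ell)$, and $\x^{n_i}$ is $\ell$-preserving, so $\|f\circ\x^{n_i}-f\|_2\to 0$ and hence $|\alpha^{n_i}-1|\,\|f\|_2\to 0$.

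One small discrepancy worth noting against the paper's own attribution: the paper states that the corollary follows from Theorem~\ref{rigid} \emph{and} total ergodicity (Theorem~\ref{tot-erg}), whereas your argument never invokes total (or even plain) ergodicity of $\x$ --- you only use the $L^2$-preservation of $\ell$. This is not a gap in your proof: for $\alpha$ that is not a root of unity, the density of $A_\epsilon$ tends to $1$ and the rigidity argument closes on its own. Total ergodicity is genuinely needed to rule out roots-of-unity eigenvalues (a $q$-th root of unity being an eigenvalue is precisely the failure of $\x^q$ to be ergodic), which is the complementary case required by the disjointness criterion quoted just above the corollary; your argument correctly isolates the irrational case as stated. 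So the two routes agree where they overlap, and you have in fact shown that the citation of total ergodicity is unnecessary for the corollary as literally stated.
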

\begin{proof}[Corollary \ref{irrat eigen}]
Following exactly the proof of \cite[Corollary 5]{Cha}, we combine Theorem \ref{rigid} with the total ergodicity of almost every non-classical exchange (Theorem \ref{tot-erg}). 

Since almost every non-classical exchange is totally ergodic, $\alpha$ cannot be rational. If $e^{2 \pi i \alpha}$ is an eigenvalue for some irrational $\alpha$ then rotation by $\alpha$ is a factor of $\x$. Then rigidity sequences of $\x$ are also rigidity sequences for the rotation. For any $e >0$, it is possible to construct a sequence of density at least $1-e$ which contains no rigidity sequence for rotation by $\alpha$. See the discussion in \cite{Cha} following Corollary 5. This implies that for any $e>0$, $\ell \left(\{ \x: \alpha \text{ is eigenvalue of } \x\}\right) < e$ finishing the proof.  
\end{proof}
\begin{proof}[Theorem \ref{rigid}]
The proof of Theorem \ref{rigid} is identical to the proof of \cite[Corollary 3]{Cha} except that the following facts need to be verified for non-classical exchanges. This is done using results from \cite{Gad}.

\subsubsection*{Fact 1:} At any stage $\jmath$ in the expansion (with generalized permutation $\pi'$), there are constants $K, C>1$ and probability $0 < p < 1$ independent of $\jmath$ such that there are future stages $\kappa_r$ that are $C$-distributed with $\max_\alpha \vert Q_{\kappa_r}(\alpha)  \vert < K \max_\alpha  \vert Q_\jmath (\alpha) \vert$
and 
\[
\sum \mathbf{m}\left( \bP Q_{\kappa_r}(W(\pi))\right) \geqslant p \mathbf{m} \left( \bP Q_{\jmath}(W(\pi')\right) 
\]
This is \cite[Proposition 10.21]{Gad}. Let $P_i = [K^i, K^{i+1}]$. A consequence of the previous fact is 
\begin{lemma}\label{den-equi}
For almost every $\x \in W(\pi)$ the set of $i$ for which for which some $C$-distributed stage satisfies $\max_{\beta \in \A} \vert Q(\beta) \vert \in P_i$ has positive density i.e., \cite[Lemma 6]{Cha} holds for non-classical exchanges.
\end{lemma}
\begin{proof}[Lemma \ref{den-equi}]
We define a random variable $F_k$ as follows: Let $\jmath_k$ and $\jmath_{k+1}$ be the $k$-th and $(k+1)$-th instances of $C$-distribution in the expansion of $\x$ with corresponding matrices $Q_k$ and $Q_{k+1}$. If
\[
\frac{\max_{\beta \in \A} \vert Q_{k+1}(\beta) \vert }{\max_{\beta \in \A} \vert Q_k(\beta) \vert} \in P_i
\]
then set $F_k(\x) = i$.  Using Fact 1, we see that $F_k$ is at worst exponentially distributed. More precisely, for $a \geqslant 2$, we have $\text{Prob}(F_k = 1) = \ell(\x: F_k(\x) = a) \leqslant (1-p)^{a-1}$. 

Also by Fact 1 we have an estimate on conditional probabilities. That is, for each $(a_1,...,a_n) \in \mathbb{N}^n$ we have that 
\[
\text{Prob}(F_{n+1}=a|(F_1,...,F_n)=(a_1,...,a_n))\leqslant (1-p)^{a-1}.
\]
It follows that for almost every $\x$ we have 
\[
\lim_{n \to \infty}\frac{1}{n} \sum_{k =1}^n F_k(\x) \leqslant \text{Prob}(F_k=1) + \sum_{a \geqslant 2} a(1-p)^{a-1} < \infty
\]
It follows from the limit above that the set of $i$ such that the $C$-distributed stages satisfy $\max_{\beta \in \A} \vert Q(\beta) \vert \in P_i$ has positive density.
\end{proof}

\subsubsection*{Fact 2:}
The probability that a sequence $\kappa$ follows a $C$-distributed stage $\jmath$ is roughly the same as the probability that a sequence begins with $\kappa$. Suppose $\kappa$ terminates in the generalized permutation $\pi'$. Quantitatively, what is needed in the proof in \cite{Cha} is an estimate of the form:
\[
\mathbf{m}\left(\bP Q_\kappa(W(\pi'))\right) < \frac{\mathbf{m}\left(\bP Q_{\jmath \ast \kappa}(W(\pi'))\right)}{\mathbf{m}\left(\bP Q_\jmath(W(\pi))\right)} C^{d-1}
\]
which simply follows from Remark~\ref{distort-distribute}. 

\subsubsection*{Fact 3:} 
Let $R> 1$ be a real number. We need an estimate for the number of maximal columns with norm $R$ i.e., columns $Q(\alpha)$ such that $\vert Q(\alpha) \vert =  \max_{\beta \in \A} \vert Q(\beta) \vert = R$ over the set of all possible $C$-distributed stages in the expansions of non-classical exchanges. A simple count shows that the number of vectors $v$ for which $\vert v \vert \asymp R$ is $O(R^{d-1})$. However, for non-classical exchanges, there is a restriction coming from the fact that the projective linear maps must map configuration spaces to configuration spaces. Thus, not every $v$ with $\vert v \vert \asymp R$ can be a maximal column $Q(\alpha)$. \cite[Lemma 12.2]{Gad} implies that for $C$-distributed stages, the contraction of $\bP Q$ is uniform in all directions. To be precise, \cite[Lemma 12.2]{Gad} implies that the projectivization $\bP Q(\alpha)$ of a maximal column $Q(\alpha)$ has to lie in a $C^2/R^d$-neighborhood of $W(\pi)$ in $\Delta$. This gives the estimate for the number of maximal columns with $R$ to be $O(R^{d-2})$.

\subsubsection*{Fact 4:} 
In the same situation as Fact 3, we want to estimate the $\mathbf{m}$-measure of the set of non-classical exchanges given by a $C$-distributed stage i.e., for a $C$-distributed stage terminating in a generalized permutation $\pi'$ we want to estimate $\mathbf{m}(\bP Q(W(\pi'))$ in terms of $R$. In \cite[Section 8]{Gad}, we analyze the Jacobian of the restriction to configuration spaces of a projective linear map $\bP Q$. Precisely, we show that  
\[
\J(\bP Q) (\y) = \frac{1}{a \vert Q \y \vert^{d-1}} 
\]
where $a> 0$ is a constant that depends on the stage. \cite[Lemma 12.2]{Gad} then implies that for any $C$-distributed stage, the constant $a$ has a lower bound that depends only on $C$. Consequently,
\[
\mathbf{m}(\bP Q(W(\pi')) \asymp O(R^{-(d-1)})
\]
Facts 3 and 4 together imply that the set of $\x$ for which there is a stage $\kappa$ in the Rauzy expansion of $\x$ such that $\max_{\beta \in \A} \vert Q_\kappa (\beta) \vert \asymp R$ is at most $O(R^{-1})$ i.e., 
\cite[Lemma 10]{Cha} holds for non-classical exchanges. Along with the estimate $\mathbf{m}(W(\pi, \alpha, \xi)) \asymp \xi^{d-2}$ for the set $W(\pi, \alpha, \xi)$ defined in the proof of Theorem \ref{cyclic}, the above lemma implies that $\mathbf{m}(\x \in W(\pi) : R \text{ is expected } \xi\text{-rigidity time for } \x) \asymp \xi^{d-2}/R$.

\subsubsection*{Fact 5:} 
From Lemma \ref{den-equi} and Fact 2 it follows that the set of $i$ for which $\x$ has a $\xi$-rigidity time with fixed previous induction steps in $[K^i,K^{i+1}]$ has measure at least $c \xi^{d-2}$.

The estimates at the end of Facts 4 and 5 prove Theorem \ref{rigid} by the exact argument in the proof of \cite[Corollary 3]{Cha}. The key point is that the estimates establish that there is a constant $C'$ independent of $\xi$ so that among the set of nonclassical exchanges that have a $\xi$ rigidity time between $K^i$ and $K^{i+1}$, for the special reason given above, the set of those for which this time is in a set of density $\delta$ in $[K^i,K^{i+1}]$ has proportion at most $C'\delta$. This follows from Fact 3 which limits how many $C$-distributed matrices can have the largest column sum $R$ for a single $R$.
\end{proof}

\section{Total ergodicity}

A measure preserving transformation is said to be {\em totally ergodic} if every forward iterate of it is ergodic. To finish the proof of Theorem~\ref{disjoint}, we show:

\begin{theorem}\label{tot-erg}
If $\pi$ has an orientation preserving band then almost every $\x$ in $W(\pi)$ is totally ergodic. 
\end{theorem}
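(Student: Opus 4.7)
The plan is to reduce total ergodicity to ruling out nontrivial rational eigenvalues and then contradict their existence using Theorem~\ref{cyclic}. Since almost every $\x \in W(\pi)$ is uniquely ergodic by \cite{Mas}, it is ergodic, so total ergodicity reduces to the absence of an eigenvalue $\lambda = e^{2\pi i p/q}$ with $q \geq 2$ and $\gcd(p, q) = 1$. By a countable union over rationals, it suffices to show that for each such $\lambda$, the set $E_\lambda$ of $\x \in W(\pi)$ for which $\lambda$ is an eigenvalue of $\x$ has $\mathbf{m}$-measure zero.

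Suppose $\x \in E_\lambda$ with eigenfunction $f \in L^2$, $\|f\|_2 = 1$, $f \circ \x = \lambda f$. We apply Theorem~\ref{cyclic} with $\delta_k \downarrow 0$ to obtain cyclic approximations $(\alpha_k, N_k, J_k)$, where each $\alpha_k$ is orientation preserving in the terminal permutation. Properties (3) and (4) imply that the indicators of the tower levels $\x^j J_k$ generate a dense subspace of $L^2(I_+ \sqcup I_-)$ and each such indicator is approximately $\x^{N_k}$-invariant, so $\|g \circ \x^{N_k} - g\|_2 \to 0$ for every $g \in L^2$. Applied to $g = f$ together with $f \circ \x^{N_k} = \lambda^{N_k} f$, this gives $|\lambda^{N_k} - 1| \to 0$. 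Since $\{\lambda^n\}_n$ is the finite set of $q$-th roots of unity, $\lambda^{N_k} = 1$ for all $k$ large, i.e., $q \mid N_k$ eventually.

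To contradict this on a set of full measure, we show that for $\mathbf{m}$-almost every $\x$ there are infinitely many cyclic approximation heights $N$ satisfying $N \not\equiv 0 \pmod q$. By Theorem~\ref{Strong-Normality}, for any admissible finite sequence $\sigma$ based at $\pi$ and ending at a permutation in which some $\alpha$ is orientation preserving, the expansion of $\x$ contains infinitely many $C$-distributed stages $\kappa$ followed by $\sigma$, each of which yields a cyclic approximation of height $N = |Q_\kappa Q_\sigma(\alpha)|$ via the construction in the proof of Theorem~\ref{cyclic}. As $\kappa$ ranges over these $C$-distributed stages, the integers $|Q_\kappa Q_\sigma(\alpha)|$ realize every residue class modulo $q$; in particular, some such $N$ satisfies $N \not\equiv 0 \pmod q$, giving the required contradiction.

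The principal obstacle is the surjection claim in the preceding paragraph, namely that $\{|Q_\kappa Q_\sigma(\alpha)| \bmod q : \kappa \text{ a $C$-distributed stage preceding } \sigma \text{ in } \x\text{'s expansion}\} = \Z/q\Z$. The orientation preserving band hypothesis is essential: without it, $\x^2$ need not be minimal (remark after Theorem~\ref{disjoint}), imposing a $\Z/2$-periodicity on all tower heights that precludes any such surjection. The required combinatorial input is that the loops in $\cG_{irr}$ based at $\pi$ passing through a split involving an orientation preserving band generate column-norm increments hitting every residue class modulo $q$; this should follow from the strong connectivity of $\cG_{irr}$ (Theorem~\ref{attractors}) combined with the orientation preserving band hypothesis, in the spirit of the analogous argument for classical exchanges in \cite{Cha}.
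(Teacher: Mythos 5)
Your reduction via rational eigenvalues is a sound alternative to the paper's route (Proposition~\ref{suff cond}, which goes via ergodic decomposition of $\x^p$): a uniquely ergodic $\x$ is totally ergodic iff it has no eigenvalue that is a nontrivial root of unity, and your chain $\|f\circ\x^{N_k}-f\|_2\to 0 \Rightarrow |\lambda^{N_k}-1|\to 0 \Rightarrow q\mid N_k$ eventually is correct. Both routes reduce the theorem to the same kind of statement: for a.e.\ $\x$, for each $q\geq 2$ (or each prime $p$), there exist arbitrarily good cyclic approximation heights $N$ with $N\not\equiv 0\pmod q$. In that sense the two proofs are structurally similar; you could in fact reduce further to primes exactly as the paper does.

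The gap is precisely the step you flag as the ``principal obstacle,'' and your appeal to strong connectivity of $\cG_{irr}$ and to the classical-exchange argument does not carry it. Two things are missing. First, you need, not a surjection onto $\Z/q\Z$ (which is more than required and more than the paper shows), but merely \emph{one} achievable height coprime to $p$ for each prime $p\mid q$. Second, and crucially, the height of a cyclic approximation produced by Theorem~\ref{cyclic} is the column norm $|Q(\alpha)|$ of an \emph{orientation preserving} band $\alpha$. The $SL(d,\Z)$ fact (Lemma~\ref{sl prime}) guarantees some column norm is coprime to $p$, but not that it sits over an orientation preserving band; forcing this is exactly where the orientation preserving band hypothesis does nontrivial work. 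The paper handles it via a careful induction on residues mod $p$ through the splitting sequence (the Claim preceding Claim~\ref{orp seqn}), together with Claim~\ref{orp seqn}, which uses \cite[Proposition 10.1]{Gad} to steer a pair of orientation reversing bands with an appropriate remainder mismatch into critical positions, producing in finitely many moves an orientation preserving band with column norm $\not\equiv 0\pmod p$. This argument has no analogue in \cite{Cha}, since for classical exchanges every band is orientation preserving and the $SL(d,\Z)$ fact alone suffices; for non-classical exchanges the residue bookkeeping across orientation reversing bands is the real content, and it needs to be supplied.
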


We first give a sufficient condition for total ergodicity.
\begin{proposition}\label{suff cond} 
A non-classical exchange $\x$ is totally ergodic with respect to $\ell$ if for any prime $p$, there are arbitrarily good cyclic approximations of height $n$ coprime to $p$.
\end{proposition}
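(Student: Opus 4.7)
The plan is to argue by contradiction in three steps: reduce total ergodicity to the ergodicity of $\x^p$ for every prime $p$ via a spectral argument, use the hypothesized coprime cyclic approximations to establish the ergodicity of each $\x^p$, and conclude via Lebesgue density. Ergodicity of $\x$ itself is immediate from any cyclic approximation, because $\x$-invariance forces an invariant set $A$ to have equal density in every atom $\x^j(J)$. Now suppose $\x^k$ is not ergodic for some $k \geq 2$: spectrally, $\x$ admits an $L^2$ eigenfunction $f$ with eigenvalue $\zeta \neq 1$ satisfying $\zeta^k = 1$. Ergodicity of $\x$ makes $|f|$ constant, so we may take $|f| = 1$, and then $f^m$ is an eigenfunction with eigenvalue $\zeta^m$ for every integer $m$. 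Letting $d$ be the order of $\zeta$ and $p$ any prime dividing $d$, the function $f^{d/p}$ is an eigenfunction whose eigenvalue is a primitive $p$-th root of unity, which would force $\x^p$ to fail to be ergodic. Hence it suffices to establish $\x^p$-ergodicity for every prime $p$.

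Fix a prime $p$ and a $\x^p$-invariant set $A$ with $0 < \ell(A) < 1$. By hypothesis, choose a cyclic approximation of height $N$ coprime to $p$ with accuracy $\delta$, and let $B_j = \x^j(J)$ be the atoms of the tower and $d_j = \ell(A \cap B_j)/\ell(J)$ the density of $A$ in $B_j$. When $j + p < N$ one has $\x^p(B_j) = B_{j+p}$ exactly, so $\x^p$-invariance yields $d_{j+p} = d_j$; when $j + p \geq N$ (which happens for exactly $p$ values of $j$), property (4) of Theorem~\ref{cyclic} bounds the symmetric difference between $\x^p(B_j)$ and $B_{j+p \bmod N}$ by $2\delta\,\ell(J)$, giving $|d_{j+p \bmod N} - d_j| \leq 2\delta$. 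Since $\gcd(p, N) = 1$, a single orbit of $j \mapsto j+p \bmod N$ visits every atom and undergoes exactly $p$ wraparounds in total, so $|d_i - d_j| \leq 2p\delta$ for all $i, j$. Summing over the tower of measure at least $1-\delta$ then gives $d_j = \ell(A) + O(p\delta)$ for every $j$.

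To complete the contradiction, given any interval $E \subset I_+ \sqcup I_-$ choose a cyclic approximation with atom size $\ell(J) \ll \ell(E)$ (which is possible because $\ell(J) \to 0$ as $\delta \to 0$ for typical $\x$). Then $E$ is approximately a disjoint union of full atoms with boundary of measure $O(\ell(J))$, so
\[
\frac{\ell(A \cap E)}{\ell(E)} = \ell(A) + O\!\left(\tfrac{\ell(J)}{\ell(E)} + p\delta\right),
\]
which can be made arbitrarily close to $\ell(A)$. Hence $\ell(A \cap E) = \ell(A)\,\ell(E)$ for every interval $E$, contradicting the Lebesgue density theorem at a density point of $A$. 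The main obstacle is the error bookkeeping in the middle step: a naive bound over the full $\x^p$-cycle would give error of order $N\delta$, which need not vanish as $\delta \to 0$. What saves the argument is that wraparound errors only occur at transitions across the top of the tower, and exactly $p$ such crossings happen per full cycle, giving the sharper bound $2p\delta$. This is precisely what lets the proposition close using cyclic approximations required only to be coprime to the prime $p$.
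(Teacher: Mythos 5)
Your reduction to $\x^p$-ergodicity for primes $p$ via the Koopman spectrum is a clean and valid alternative to the paper's lemma (which instead uses the ergodic decomposition theorem to exhibit a non-full-measure $\x^p$-invariant set); both routes are fine. The middle step, where you track the densities $d_j = \ell(A \cap B_j)/\ell(J)$ around the $\x^p$-cycle and observe that coprimality confines the wraparound error to $p$ crossings of magnitude $2\delta$, is also correct and is really the heart of why coprime heights matter.

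The gap is in the final Lebesgue-density step. The atoms of the tower are $B_j = \x^j(J)$ where $J = I(\alpha,n)$ has \emph{two} components, one in $I_+$ and one in $I_-$ (precisely because $\alpha$ is orientation preserving in $\pi_n$; see the Remark following Theorem~\ref{cyclic}). Consequently each $B_j = \x^j(J_+) \cup \x^j(J_-)$ is a disjoint union of two intervals that need not be near one another, and an interval $E$ sitting inside $I_+$ or $I_-$ is \emph{not} approximately a union of full atoms $B_j$; it is a union of half-atoms. The quantity $d_j$ only controls $\ell(A\cap B_j)$, not $\ell(A \cap \x^j(J_+))$ and $\ell(A \cap \x^j(J_-))$ separately. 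So your displayed estimate $\ell(A\cap E)/\ell(E) = \ell(A) + O(\ell(J)/\ell(E) + p\delta)$ does not follow: $A$ could a priori have density near $1$ on all the $\x^j(J_+)$ and density near $2\ell(A)-1$ on all the $\x^j(J_-)$ while still satisfying $d_j \approx \ell(A)$, and then the density of $A$ in a small interval $E$ depends on which half-atoms populate $E$.

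To close this you must work with a single component, $J_+$ say, and show that the return map approximately preserves $J_+$ (not just $J$). This is exactly why the paper insists that $\alpha$ be orientation preserving: the first return $\cR^n(\x)$ then carries $J_+ \subset I_+(n)$ back into $I_+(n)$, so $\ell(J_+ \cap \x^N(J_+)) > (1 - O(\xi))\ell(J_+)$, a refinement of property~(4) that is established in the proof of Theorem~\ref{cyclic} but not in its bare statement. The paper's proof then picks a small interval $V$ mostly inside the invariant set via Lusin's theorem, locates a single tower level $\x^s J_+ \subset V$, and propagates along the $\x^p$-cycle of that one component. Your wraparound bookkeeping can be redone at the level of $\x^j(J_+)$ to obtain densities $e_j$ that are all approximately equal, and combined with a density point of $A$ and of $A^c$ lying in $\x^{j}(J_+)$-type levels this yields the contradiction; but as written, with full atoms and arbitrary intervals $E$, the argument does not go through.
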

\begin{lemma}\label{tot-prime}
Suppose $\x$ is ergodic but not totally ergodic. There is a prime $p$ such that $\x^p$ is not ergodic. 
\end{lemma}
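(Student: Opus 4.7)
The plan is to pass to the Koopman operator $Uf = f \circ \x$ on $L^2(I_+ \sqcup I_-, \ell)$, which is unitary since $\x$ preserves $\ell$. Ergodicity of $\x$ is equivalent to $1$ being a simple eigenvalue of $U$, and $\x^n$ fails to be ergodic precisely when $1$ is a non-simple eigenvalue of $U^n$. Decomposing the fixed subspace of $U^n$ into joint eigenspaces of $U$ (available because $U$ commutes with $U^n$), this latter condition is equivalent to $U$ having an eigenvalue $\zeta \neq 1$ satisfying $\zeta^n = 1$.

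So, given that $\x$ is ergodic but not totally ergodic, I would pick some $n \geq 2$ witnessing failure of ergodicity of $\x^n$ and extract an eigenfunction $f$ of $U$ with eigenvalue $\zeta$, a non-trivial $n$-th root of unity. Writing $\zeta$ as a primitive $k$-th root of unity with $k \geq 2$ and $k \mid n$, I would let $p$ be any prime divisor of $k$. The key observation is that ergodicity of $\x$ forces $|f|$ to be constant almost everywhere: taking absolute values in $f \circ \x = \zeta f$ gives $|f| \circ \x = |f|$, and ergodicity then pins $|f|$ to a constant, which I may normalise to $1$. Consequently $g := f^{k/p}$ is a well-defined unimodular element of $L^\infty$, and
\[
g \circ \x = (f \circ \x)^{k/p} = \zeta^{k/p} g.
\]
Because $\zeta$ has exact order $k$ and $p \mid k$, the element $\zeta^{k/p}$ has exact order $p$; in particular $\zeta^{k/p} \neq 1$ while $(\zeta^{k/p})^p = 1$. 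Hence $g \circ \x^p = g$, so $g$ is $\x^p$-invariant, and $g$ cannot be a.e.\ constant (a constant would force $\zeta^{k/p} = 1$). This non-trivial $\x^p$-invariant function exhibits $\x^p$ as non-ergodic for the prime $p$, completing the argument.

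There is essentially no serious obstacle here; this is a standard application of spectral theory for measure-preserving transformations. The only two points requiring brief verification are that the eigenfunction $f$ has constant modulus (a one-line consequence of ergodicity, as above) and that $\zeta^{k/p}$ is a primitive $p$-th root of unity (immediate from elementary properties of the cyclic group generated by $\zeta$). Neither uses anything specific to non-classical exchanges, so the lemma is really a general fact about ergodic systems used as input to Proposition \ref{suff cond}.
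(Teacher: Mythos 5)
Your argument is correct, but it takes a genuinely different route from the paper's. The paper works directly with measure-theoretic structure: it applies the ergodic decomposition theorem to $\x^k$ to produce an $\x^k$-invariant set $S$ of positive measure supporting an ergodic component, then uses ergodicity of $\x$ to show the translates $S, \x S, \dotsc$ tile the space with period $m$ (the minimal $m$ with $\x^m S = S$, necessarily $\geq 2$), and finally forms an explicit $\x^p$-invariant set of measure $1/p < 1$ from the union of translates $\x^{jp} S$ for a prime $p \mid m$. Your proof instead passes to the Koopman operator $U$ and uses the spectral observation that non-ergodicity of $\x^n$ together with ergodicity of $\x$ forces an eigenvalue $\zeta \neq 1$ of $U$ with $\zeta^n = 1$; raising the eigenfunction to the power $k/p$ manufactures a primitive $p$-th root of unity eigenvalue and hence a non-constant $\x^p$-invariant function. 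Both arguments are valid and standard; the paper's is more elementary (no spectral theory needed, only the ergodic decomposition), while yours is more conceptual and directly exposes the eigenvalue structure that is being exploited, which fits naturally with the surrounding material on eigenvalues of $\x$ in this section. One small point worth recording: your reduction to a primitive $k$-th root is exactly what makes the final step clean, and it sidesteps the bookkeeping over translates of $S$ that the paper performs.
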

\begin{proof}[Lemma \ref{tot-prime}]
Suppose $\x^k$ is not ergodic. By the ergodic decomposition theorem, there exists a $\x^k$ invariant set $S \subset I_+ \sqcup I_-$ with $\ell(S)>0$ such that $S$ is the support of an ergodic measure $\mu$ of $\x^k$, and any other ergodic measure of $\x^k$ assigns measure 0 to $S$. Any translate $\x^n S$ is $\x^k$ invariant and carries the $\x^k$ ergodic measure $(\x^n)_\ast \mu$. Hence, by the decomposition theorem, either  $\x^n S$ and $S$ coincide or their intersection is empty i.e. either $\x^n S \cap S = S$ or $\x^n S \cap S = \varnothing$. Let $m = \min \{n : \ell(\x^n S \cap S) > 0 \}$.  Then $\x^m S = S$. If $p \vert m$ then the set
\[
B = S \cup \x^{m/p} S \cup \x^{2(m/p)} S \cup \cdots \cup \x^{(p-1)(m/p)} S
\]
is $\x^p$ invariant and does not have full measure. Hence, $\x^p$ is not ergodic.
\end{proof}

Before the formal proof of Proposition \ref{suff cond} we state the idea, which we make precise by using Lusin's Theorem. If $\x$ is ergodic but not totally ergodic then it has a factor $F:\mathbb{Z}/p\mathbb{Z}\to \mathbb{Z}/p\mathbb{Z}$ by $F(i)=i+1$ given by identifying points with the ergodic component of $\x^p$ they are in. By assumption there exist $n_i$ such that $p \nmid n_i$ and $\x^{n_i}$ are arbitrarily close to the identity. Because $\x^{n_i}$ is arbitrarily close to the identity and $F$ is a factor $F^{n_i}$ is close to the identity too. But $F^{n_i}=F^t$ where $t \equiv n_i $ mod $p$.  Because $gcd(p,n_i)=1$ we have $F^{n_i}$ is not close to the identity.

\begin{proof}[Proposition \ref{suff cond}]
Let $B$ be a $\x^p$-invariant set that is the support of an ergodic measure as above. Then the translates $\{B, \x B , \x^2 B, \cdots, \x^{p-1} B\}$ are all disjoint. 

By Lusin's Theorem, we may assume that $B$ contains all but a very small proportion, say $1/u$ for very large $u$, of a subinterval $V$ of $I_+ \sqcup I_-$ of size $\delta_1$. Choose $\delta$ for the cyclic approximation to be less than $\delta_1/p$.  By assumption, we may chose
 the height $n$ of a cyclic approximation to $\x$ to be such that gcd$(n,p)=1$ and $n>3/\delta$. For $0 \leqslant j < n$, call the sets $\x^j J$ as {\em levels} of the approximation. By the choice of $\delta$, there is some level $\x^s J$ such that at least one of the two subintervals of $\x^s J$, say $\x^s J_+$ is in $V$. Consider the subset 
 \[
 J'_+= \underset{i=0}{\overset{p}{\cap}}\x^{-in}(J_+).
 \] 
These are points in $J_+$ whose first $pn$ iterates of $\x$ lie in the corresponding level mod $n$. By the approximation theorem, we have the estimate:
\[
\ell(J'_+) > \ell(J_+)(1-p \delta) > \ell(J_+)(1-\delta_1)
\]
Notice that if $t \in J'_+$ then the sequence $(\x^{ip}) t$ for $0 \leqslant i < n$ hits each level exactly once. So we get the estimate:
 \[
 \ell\left(\underset{j=0}{\overset{p-1}{\cap}}\x^{-jn}(\x^s J_+\cap B)\right)> \ell(J_+)(1-\delta_1-1/u).
 \] 
This implies that $\underset{i=0}{\overset{n-1}{\cup}}\x^{pi} \left( \underset{j=1}{\overset{p-1}{\cap}}\x^{-jn}(\x^sJ_+ \cap B)\right)$ is most of the set $\underset{i=0}{\overset{n-1}{\cup}}\x^{-i}(J_+)$. By $\x^p$ invariance of $B$, the former is also a subset of $B$. Since 
\[
\ell\left(\underset{i=0}{\overset{n-1}{\cup}}\x^{-j}(J_+) \right) > 1/2- \delta,
\]
we must have $\ell(B) \geqslant 1/2$. If $\ell(B) \neq 1$ then disjointness of the translates of $B$ implies $\ell(B) = 1/2$. This means that $p=2$ and $\x B = B^c = I_+ \sqcup I_- \setminus B$. This contradicts the fact that the subset of $B$ constructed above contains most of $\underset{i=0}{\overset{n-1}{\cup}}\x^{-i}(J_+)$. Hence, $B$ has full measure, and $\x^p$ is ergodic with respect to $\ell$. 
\end{proof}

It remains to show the existence of arbitrarily good cyclic approximations with heights coprime to $p$. This proposition is the only result that requires the hypothesis that $\pi$ has orientation preserving bands. 
\begin{proposition} \label{prime approx} Suppose $\pi$ has an orientation preserving band. For any $\delta>0$ and prime $p$, the set
\[
\{ \x \in W(\pi): \exists n \text{ with } \gcd(n,p) = 1 \text{ s.t. } \x \text{ has a height }n \text{ cyclic approximation with constant } \delta \}
\]
has full measure.
\end{proposition}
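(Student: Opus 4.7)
The plan is to refine the construction in the proof of Theorem~\ref{cyclic} so that, for almost every $\x$, the height $N$ produced can additionally be taken coprime to $p$. Recall that in that proof, $N = |Q_{\kappa\ast\jmath}(\alpha)|$ for a fixed $C$-distributed $\jmath:\pi\to\pi'$ with $\alpha$ orientation preserving in $\pi'$, and a data-dependent $C$-distributed preceding stage $\kappa$ supplied by Theorem~\ref{Strong-Normality}. My approach is to introduce extra combinatorial flexibility using loops in the attractor $\cG$.

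Using strong connectedness of $\cG$ (Theorem~\ref{attractors}) and the presence of an orientation preserving band at $\pi'$, I would construct loops $\rho_0=\mathrm{trivial},\rho_1,\ldots,\rho_{p-1}$ at $\pi'$ in which $\alpha$ remains orientation preserving throughout each. Set $\jmath_k=\jmath\ast\rho_k$; each $\jmath_k$ again ends at $\pi'$ with $\alpha$ orientation preserving. Applying the construction of Theorem~\ref{cyclic} to $\jmath_k$ produces, for almost every $\x$, a preceding $C$-distributed stage $\kappa_k$ and a candidate cyclic approximation of constant $\delta$ with height
\[
h_k(\x) = |Q_{\kappa_k\ast\jmath_k}(\alpha)| = \sum_{\gamma\in\A}\bigl[Q_{\jmath_k}(\alpha)\bigr]_\gamma\,|Q_{\kappa_k}(\gamma)|.
\]
Reducing modulo $p$, with $v_k=Q_{\jmath_k}(\alpha)\bmod p$ and $u_k=\bigl(|Q_{\kappa_k}(\gamma)|\bigr)_\gamma\bmod p$ in $\mathbb{F}_p^\A$, this reads $h_k\equiv v_k\cdot u_k\pmod p$.

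Two linear-algebraic observations drive the coprimality. First, each $u_k\neq0$ in $\mathbb{F}_p^\A$: since $Q_{\kappa_k}\in SL(d,\Z)$ is invertible modulo $p$, the all-ones row cannot lie in its left nullspace, so some column sum is nonzero mod $p$. Second, the loops $\rho_k$ can be chosen so that $\{v_k\}_{k=0}^{p-1}$ spans $\mathbb{F}_p^\A$: each split in which $\alpha$ is the loser against a band $\beta$ replaces the $\alpha$-column of the accumulated matrix by itself plus the $\beta$-column, and by strong connectedness we can realize loops at $\pi'$ in which $\alpha$ loses against sufficiently varied $\beta$'s to generate all of $\mathbb{F}_p^\A$. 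Modulo the issue addressed in the next paragraph, these two facts force $v_k\cdot u_k\not\equiv0\pmod p$ for at least one $k$, yielding a height coprime to $p$.

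The main technical obstacle I expect is that the stages $\kappa_k$ depend on $k$, so $u_k$ varies with $k$ and the simple ``nonzero $u$ combined with spanning $v_k$'s'' argument does not immediately apply. To address this, my plan is as follows: by Theorem~\ref{Strong-Normality}, in the expansion of almost every $\x$ there are infinitely many occurrences of $C$-distributed stages followed by $\jmath$, and by Lemma~\ref{rel-prob} the continuations $\rho_k$ appear with positive and uniformly comparable relative probabilities. One then uses ergodicity and equidistribution properties of the Rauzy cocycle modulo $p$ (in the spirit of Facts 3--5 in Section~6) to show that, for almost every $\x$, the joint distribution of the pairs $(k,u_k)$ across the various Strong-Normality instances is rich enough that at least one such pair yields $v_k\cdot u_k\not\equiv0\pmod p$. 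Verifying the spanning combinatorics of the $v_k$'s inside the attractor, and carrying out this equidistribution reduction cleanly, are the delicate steps of the argument.
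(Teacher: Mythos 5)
Your reduction of the height to a dot product modulo $p$,
\[
h \equiv \sum_{\gamma \in \A}\bigl[Q_\jmath(\alpha)\bigr]_\gamma\,\lvert Q_\kappa(\gamma)\rvert \pmod p,
\]
is correct, and your Observation 1 (that the vector of column norms of $Q_\kappa$ is nonzero mod $p$ because $Q_\kappa \in SL(d,\Z)$) is precisely the paper's Lemma~\ref{sl prime}. But there is a genuine gap at exactly the place you flag: because the $C$-distributed stage $\kappa_k$ supplied by Strong Normality depends on $k$, the vectors $u_k$ vary with $k$, and ``$\{v_k\}$ spans'' together with ``each $u_k\neq 0$'' does not force $v_k\cdot u_k\neq 0$ for any $k$. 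Your proposed fix — equidistribution of the column-norm residues of the Rauzy cocycle mod $p$ along $C$-distributed return times — is a substantially stronger statement than anything in Facts 3--5 or in \cite{Gad}; those facts give measure estimates, not distributional control of residues, and establishing such equidistribution would itself be a nontrivial new theorem. A second, smaller gap: the spanning claim for $\{v_k\}$ is asserted, not proved. One would need loops at $\pi'$ in which $\alpha$ stays orientation preserving and wins against enough different bands to make the $\alpha$-columns of $Q_{\rho_k}$ span $\F_p^\A$; this requires at least $d$ loops (not $p$), and the combinatorics of keeping $\alpha$ orientation preserving under repeated wins against orientation-reversing bands is delicate and unaddressed.

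The paper takes a route that sidesteps equidistribution entirely. It proves by induction on $n$ a dichotomy (the Claim inside Lemma~\ref{sl prime 2}): at every stage either some orientation preserving band already has column norm coprime to $p$, or there exist orientation reversing bands $\beta_1$ on $I_+$ and $\beta_2$ on $I_-$ whose column-norm residues $r_1,r_2$ satisfy $r_1+r_2\not\equiv 0\pmod p$. Claim~\ref{orp seqn} then shows that in the second case a splitting sequence of \emph{bounded} length, depending only on the finite data of the residue assignment $\sigma$ (of which there are at most $p^d$), brings $\beta_1,\beta_2$ into critical position and produces an orientation preserving band with nonzero residue $r_1+r_2$. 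Because these continuations $\jmath(\sigma)$ lie in a finite set, Strong Normality applies directly: for a.e.~$\x$, infinitely often a $C$-distributed stage is followed by the appropriate $\jmath(\sigma)$. This replaces your unproved equidistribution input with a finite combinatorial case analysis and uniformly bounded continuations — which is exactly the form of statement Strong Normality is designed to feed. If you want to rescue your approach, the lesson from the paper is to aim not at equidistribution but at a \emph{single} bounded-length corrective continuation for each residue pattern, and then let Strong Normality do the rest.
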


We first state and prove a well known fact for completeness.
\begin{lemma}  \label{sl prime} Let $Q$ be a matrix in $SL(d,\Z)$ with column norms $q_1, q_2, \cdots, q_d$. Then, $\text{gcd}(q_1,q_2,...,q_d)=1$.
\end{lemma}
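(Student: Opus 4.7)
The plan is to exploit that in the context of this paper, the matrices $Q \in SL(d,\Z)$ under consideration are products of the elementary matrices $E = I + M_{\alpha\beta}$ introduced in Section \ref{Rauzy}, hence have non-negative integer entries. Consequently the column norm $q_j = |Q(\alpha_j)|$ is literally the column sum $\sum_{i} Q_{ij}$. Writing $\mathbf{1} = (1,\dotsc,1)^T \in \Z^d$, this means $\mathbf{1}^T Q = (q_1,\dotsc,q_d)$.

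The key step is then a one-line argument. Suppose $g = \gcd(q_1,\dotsc,q_d)$. Then the vector $\mathbf{v} = g^{-1}(q_1,\dotsc,q_d)$ lies in $\Z^d$. Since $\det Q = \pm 1$, the inverse $Q^{-1}$ also has integer entries, so
\[
\mathbf{1}^T \;=\; (q_1,\dotsc,q_d)\, Q^{-1} \;=\; g\,\mathbf{v}^T Q^{-1},
\]
and the row vector $\mathbf{v}^T Q^{-1}$ is integral. Hence $g$ divides every entry of $\mathbf{1}^T$, which forces $g = 1$.

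There is no real obstacle; the only thing to verify carefully is that the $Q$ appearing in subsequent applications of this lemma (in Facts 3 and 4 of the previous section) is indeed a product of the split matrices $I + M_{\alpha\beta}$, so that the $\ell^1$ norm $|Q(\alpha)|$ coincides with the column sum and the short argument above applies verbatim.
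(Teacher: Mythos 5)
Your proof is correct and is essentially the paper's argument: the paper multiplies by the all-ones matrix $A=\mathbf{1}\mathbf{1}^T$ and reads off that $AQ$ has every entry divisible by the gcd while $AQQ^{-1}=A$ does not, which is the same computation as your $\mathbf{1}^T Q = (q_1,\dotsc,q_d)$ followed by right-multiplication by the integral $Q^{-1}$. Your explicit remark that one must know $Q$ has non-negative entries (so that the $\ell^1$ column norm equals the column sum) is a point the paper leaves implicit, and is worth keeping.
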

\begin{proof}[Lemma \ref{sl prime}]
Let $A$ be the $d\times d$ matrix with all entries 1. Every entry of $AQ$ is divisible by $\text{gcd}(q_1,q_2,...,q_d)$. Therefore, for all matrices $B  \in SL(d, \Z)$ the matrix $AQB$ has every entry divisible by gcd$(q_1,q_2,...,q_d)$. In particular, this is true for $B= Q^{-1}$. However $AQQ^{-1}=A$. Therefore, gcd$(q_1,q_2,...,q_d)=1$.
\end{proof}

\begin{lemma} \label{sl prime 2}
Let $p$ be a prime and suppose $\pi$ has orientation preserving bands. Then, there is a constant $C>1$ such that for almost every $\x \in W(\pi)$ there are infinitely many $\{n_k\}_{k=1}^\infty \subset \N$ such that $Q_{\x, n_k}$ is $C$-distributed and there is an orientation preserving band $\alpha_k$ for which $\vert Q_{\x, n_k}(\alpha_k) \vert$ is coprime to $p$. 
\end{lemma}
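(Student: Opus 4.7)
My plan is to combine Lemma~\ref{sl prime}, which forces the mod-$p$ column-norm vector to be nonzero at every stage, with a combinatorial claim in the attractor $\cG$, and then promote the resulting positive-probability statement to an ``infinitely often'' statement via Theorem~\ref{Uniform-distortion}, Theorem~\ref{Strong-Normality}, and Lemma~\ref{rel-prob}. The starting observation is purely mechanical: a single Rauzy split in which the $\alpha_0$-column adds to the $\alpha_1$-column changes the vector of column norms $v := (|Q(\beta)|)_\beta \in \N^\A$ by $v \mapsto v + v_{\alpha_0} e_{\alpha_1}$, fixing all other entries. Reducing modulo $p$, Rauzy splits therefore act on the residue $\bar v := v \bmod p \in (\Z/p\Z)^\A$ by elementary row operations, and by Lemma~\ref{sl prime} this residue is never the zero vector; appending a sequence $\kappa$ of splits sends $\bar v \mapsto Q_\kappa^T \bar v$.

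The main combinatorial input is the claim: \emph{for each nonzero $\bar v \in (\Z/p\Z)^\A$ there exists a (possibly empty) loop $\kappa(\bar v)$ in $\cG$ starting and ending at $\pi$ such that $Q_{\kappa(\bar v)}^T \bar v$ has a nonzero coordinate indexed by an orientation-preserving band of $\pi$.} If $\bar v$ already has a nonzero orientation-preserving coordinate, take $\kappa(\bar v) = \emptyset$. Otherwise, choose $\beta \in \A$ with $\bar v_\beta \neq 0$ and an orientation-preserving band $\alpha$ of $\pi$, guaranteed by the hypothesis. By the strong connectivity of $\cG$ (Theorem~\ref{attractors}), one can navigate from $\pi$ through a sequence of splits to a configuration in which a Rauzy move adds the $\beta$-column into the $\alpha$-column, creating a nonzero residue at coordinate $\alpha$, and then return to $\pi$ along a further path arranged not to zero out this new nonzero residue. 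Since $(\Z/p\Z)^\A \setminus \{0\}$ is finite, the collection $\F := \{\kappa(\bar v) : \bar v \neq 0\}$ is finite.

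With the combinatorial claim in hand, Theorem~\ref{Uniform-distortion} supplies a uniform $C > 1$ such that for almost every $\x$ there are infinitely many $C$-distributed stages $\Sigma_n$ ending at $\pi$. At each such stage the residue $\bar v_{\Sigma_n}$ lies in the finite set $(\Z/p\Z)^\A \setminus \{0\}$, hence determines some $\kappa(\bar v_{\Sigma_n}) \in \F$. If $\kappa(\bar v_{\Sigma_n}) = \emptyset$, then $\Sigma_n$ itself already witnesses the conclusion. Otherwise, by Lemma~\ref{rel-prob} (equivalently, Fact~2 of Section~\ref{rigid}), the conditional $\mathbf{m}$-probability that the expansion continues past $\Sigma_n$ by $\kappa(\bar v_{\Sigma_n})$ is at least $\min_{\bar v} \mathbf{m}(TQ_{\kappa(\bar v)}(W(\pi)))/C^{d-1}$, a positive constant depending only on $C$ and $\F$. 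A Borel--Cantelli style argument in the spirit of the proof of Theorem~\ref{rigid} (and \cite{Cha}) then yields, for almost every $\x$, infinitely many $n$ at which the expansion does continue by $\kappa(\bar v_{\Sigma_n})$ immediately after $\Sigma_n$. At each such $n$, the concatenated matrix $Q_{\Sigma_n} Q_{\kappa(\bar v_{\Sigma_n})}$ has, by construction, an orientation-preserving column whose norm is coprime to $p$, and by Remark~\ref{distort-distribute} the concatenated stage is $C'$-distributed for a uniform constant $C'$ depending only on $C$ and $\F$, since only finitely many right multipliers appear.

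The principal obstacle is the combinatorial claim itself: for a given nonzero residue $\bar v$, producing an explicit loop in $\cG$ whose Rauzy--Veech matrix orchestrates the desired elementary operation on a coordinate indexed by an orientation-preserving band of $\pi$. Strong connectivity guarantees that connecting paths exist, but the critical-position bookkeeping---ensuring that the source band $\beta$ and the target band $\alpha$ simultaneously land in the critical positions with the appropriate width comparison, and that auxiliary splits along the return leg do not cancel the newly introduced nonzero residue---must be handled carefully along the loop.
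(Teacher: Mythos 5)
Your overall architecture is the same as the paper's: reduce the column-norm vector mod $p$, observe via Lemma~\ref{sl prime} that this residue is never the zero vector, produce for each residue class a finite ``fixing'' loop in the attractor, and promote to ``infinitely often'' using $C$-distributed stages (Theorem~\ref{Uniform-distortion}), Strong Normality, and Lemma~\ref{rel-prob}. You have also correctly identified that only finitely many residue classes occur and hence finitely many fixing loops are needed, which gives uniform bounds on lengths and distortions.

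The gap is in the combinatorial claim, and it is not a matter of ``careful bookkeeping'' as you suggest at the end---the claim as you state it is \emph{false}. Consider the residue vector $\bar v$ assigning residue $0$ to every orientation preserving band, residue $r$ to every orientation reversing band on $I_+$, and residue $-r$ to every orientation reversing band on $I_-$, for some $r\neq 0 \bmod p$. This $\bar v$ is nonzero, yet no splitting sequence can ever produce an orientation preserving band with nonzero residue. One checks this by running through the four cases of a Rauzy split by type of the moving band $\alpha$ (the smaller band, whose column gets the other's added): if $\alpha$ is OP and splits an OP band, it stays OP with residue $0$; if $\alpha$ is OP and splits an OR band on $I_-$, it becomes OR on $I_-$ with residue $0+(-r)=-r$; if $\alpha$ is OR on $I_+$ and splits an OR band on $I_-$, it becomes OP with residue $r+(-r)=0$; if $\alpha$ is OR on $I_+$ and splits an OP band, it stays OR on $I_+$ with residue $r+0=r$ --- and similarly with $I_\pm$ interchanged. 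So this ``bad'' pattern is preserved by every split, and the coordinate at each OP band remains $0$ forever. Your claim, quantified over all nonzero $\bar v$, therefore cannot be proved.

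What you are missing is an inductive invariant that rules out the bad pattern: the paper proves (its first Claim in the proof of Lemma~\ref{sl prime 2}) that the residues actually arising in a genuine expansion from $\pi$ always satisfy either (a) some OP band has nonzero residue, or (b) all OP residues are $0$ but there exist OR bands $\beta_1$ on $I_+$ and $\beta_2$ on $I_-$ with $r_1+r_2\neq 0 \bmod p$. The base case is the all-ones vector (since $\pi$ has an OP band with residue $1$), and the inductive step is a case analysis over Rauzy splits. Only with this invariant in hand does the navigation claim (the paper's Claim~\ref{orp seqn}) go through: one brings $\beta_1$ and $\beta_2$ into the critical positions and performs the split that turns one of them OP, giving residue $r_1+r_2\neq 0$. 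The rest of your argument is then sound, but without this invariant your combinatorial input has a genuine counterexample, not a bookkeeping obstacle.
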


\begin{claim}
Let $\pi \to \pi_1 \to \pi_2 \cdots$ be any infinite expansion. With the hypothesis that $\pi$ has orientation preserving bands, for every $n \in \N$ either some orientation preserving band in $\pi_n$ has column norm in $Q_n$ coprime to $p$ or if column norms in $Q_n$ of all orientation preserving bands in $\pi_n$ are divisible by $p$ then there exists orientation reversing bands $\beta_1, \beta_2$ on opposite sides i.e., on $I_+$ and $I_-$ respectively, such that $\vert Q_n(\beta_1) \vert$ and $\vert Q_n(\beta_2) \vert$ have remainders $r_1$ and $r_2$ such that $r_1+ r_2 \neq 0 \mod p$.
\end{claim}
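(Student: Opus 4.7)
The plan is to reduce the claim to a single identity modulo $p$ by exploiting how $Q_n$ intertwines the switch structures of $\pi_0$ and $\pi_n$. Encode the combinatorics by switch vectors $u_m \in \{-1,0,1\}^\A$ defined by $u_m(\alpha) = +1$ if $\alpha \in \A_+(\pi_m)$, $-1$ if $\alpha \in \A_-(\pi_m)$, and $0$ if $\alpha$ is orientation preserving in $\pi_m$; the switch condition cutting out $W(\pi_m)$ is precisely $u_m \cdot \lambda = 0$.

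First, I would establish a compatibility relation between $u_0$ and $u_n$. Since $Q_n$ sends $W(\pi_n)$ into $W(\pi_0)$ and $Q_n \in SL(d,\Z)$ is invertible, the row vector $u_0^T Q_n$ annihilates the $\pi_n$-switch hyperplane, giving
\[
u_0^T Q_n = c\, u_n^T
\]
for some integer $c$. Because $\pi$ is non-classical, $u_0$ has both $+1$ and $-1$ entries, so $u_0 \not\equiv 0 \pmod{p}$. Reducing the above identity mod $p$ and using that $\det Q_n = 1$ makes $Q_n$ invertible modulo $p$ forces $c \not\equiv 0 \pmod{p}$ as well.

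Now I would argue the claim by contradiction. Assume case (B): every orientation preserving band in $\pi_n$ has column norm divisible by $p$. Suppose further, toward a contradiction, that $|Q_n(\beta_1)| + |Q_n(\beta_2)| \equiv 0 \pmod{p}$ for every $\beta_1 \in \A_+(\pi_n)$ and $\beta_2 \in \A_-(\pi_n)$. Fixing a single $\beta_2^0$ gives a common residue $r \equiv |Q_n(\beta_2^0)| \pmod{p}$ with $|Q_n(\beta_1)| \equiv -r$ for every $\beta_1 \in \A_+(\pi_n)$, and symmetrically $|Q_n(\beta_2)| \equiv r$ for every $\beta_2 \in \A_-(\pi_n)$. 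Combined with case (B), each column norm then satisfies $|Q_n(\alpha)| \equiv -r\, u_n(\alpha) \pmod{p}$; equivalently,
\[
\mathbf{1}^T Q_n \equiv -r\, u_n^T \pmod{p}.
\]
Substituting $u_n^T \equiv c^{-1} u_0^T Q_n \pmod{p}$ and cancelling the (mod $p$) invertible $Q_n$ from the right yields $\mathbf{1} \equiv -(r/c)\, u_0 \pmod{p}$. Evaluating at any orientation preserving band $\beta$ of $\pi_0$, which exists by hypothesis, gives $u_0(\beta) = 0$ but $\mathbf{1}(\beta) = 1$, so $1 \equiv 0 \pmod{p}$, a contradiction. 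The main step that requires care is the derivation of $u_0^T Q_n = c u_n^T$ with $c \not\equiv 0 \pmod{p}$; once this is in hand, the remainder is a direct linear-algebra calculation modulo $p$.
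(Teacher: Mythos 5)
Your proof is correct, and it takes a genuinely different route from the paper. The paper proves the claim by induction on $n$, with a detailed case analysis on which band splits which and how orientation types of bands change at each step. You instead give a global argument: the switch vectors $u_0$ and $u_n$ are intertwined by $Q_n$ via $u_0^T Q_n = c\,u_n^T$, because $Q_n$ maps the full-dimensional cone $\R^+ W(\pi_n)$ inside the switch hyperplane of $\pi_n$ into that of $\pi_0$, so $u_0^T Q_n$ must be a scalar multiple of $u_n^T$; and since $Q_n \in SL(d,\Z)$ is invertible mod $p$ and $u_0 \not\equiv 0 \pmod p$, the scalar $c$ is a unit mod $p$. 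Assuming the negation of the conclusion then forces the vector of column norms $\mathbf{1}^T Q_n$ to be a multiple of $u_n^T$ mod $p$, and cancelling $Q_n$ mod $p$ contradicts the existence of an orientation preserving band in $\pi_0$. Both arguments implicitly use that every $\pi_n$ in the attractor is non-classical, so that $u_n \neq 0$ and $\A_+(\pi_n),\A_-(\pi_n)$ are both nonempty. What your approach buys is conceptual clarity: it isolates the precise structural reason the claim holds (the switch-vector relation plus unimodularity of $Q_n$) rather than tracking residues through every type of split; it also shows the induction is not really needed. The paper's approach is more elementary and explicit, and its intermediate invariant (the pair $(\beta_1,\beta_2)$ with $r_1+r_2\neq 0$ mod $p$) is exactly what the subsequent Claim~\ref{orp seqn} manipulates, so the hands-on bookkeeping dovetails with the next step of the argument. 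Either proof is acceptable; yours is the cleaner standalone proof of this particular claim.
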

\begin{proof}
When $n=0$, all column norms are 1, so the claim is true. Suppose the claim is true for $n$. 

First, suppose that $\vert Q_n(\alpha)\vert$ is coprime to $p$ for some orientation preserving band $\alpha$ in $\pi_n$. There remains an orientation preserving band with column norm in $Q_{n+1}$ coprime to $p$ unless $\alpha$ is the only orientation preserving band in $\pi_n$ with $\vert Q_n(\alpha) \vert \neq 0$ mod $p$ and $\alpha$ splits some orientation reversing band $\beta$ in the subsequent split $\pi_n \to \pi_{n+1}$. Lets suppose $\alpha$ splits the orientation reversing band $\beta$ and let $r_2 = \vert Q_n(\beta) \vert$ mod $p$. Then $\vert Q_{n+1}(\alpha) \vert = r_1 + r_2$ mod $p$. Consider an orientation reversing band $\gamma$ on the opposite side and let $r_3 = \vert Q_{n+1}(\gamma) \vert$ mod $p$. Then $r_3+ r_2 = 0$ mod $p$ and $r_3 + r_1 + r_2 = 0$ mod $p$ cannot be simultaneously true. Thus, at least one of the pairs $(\gamma, \alpha)$ and $(\gamma, \beta)$ satisfy the remainder condition of the claim.

Now suppose that all orientation preserving bands in $\pi_n$ have column norms divisible by $p$, and there are orientation reversing bands $\beta_1$ and $\beta_2$ on $I_+$ and $I_-$ respectively such that $\vert Q_n(\beta_1) \vert$ and $\vert Q_n(\beta_2) \vert$ have remainders $r_1$ and $r_2$ satisfying $r_1+ r_2 \neq 0$ mod $p$. Without loss of generality, we may assume that at least one of $\beta_1$ or $\beta_2$, say $\beta_1$, is in a critical position. Let $\gamma$ be the band in the other critical position. If $\gamma$ splits $\beta_1$ then the $\beta_1$-column is unchanged, so assume that $\beta_1$ splits $\gamma$. If $\gamma$ is orientation preserving then $\beta_1$ remains orientation reversing on $I_+$ and $\vert Q_{n+1}(\beta_1) \vert = r_1$ mod $p$. So the pair $(\beta_1,\beta_2)$ continues to satisfy the remainder condition. If $\gamma$ is orientation reversing then either $\vert Q_{n+1}(\beta_1) \vert \neq 0$ mod $p$ in which case we have $\beta_1$ as an orientation preserving band with column norm coprime to $p$, or $\vert Q_{n}(\gamma) \vert = - r_1$ mod $p$. In the later case, there must exist an orientation reversing band $\beta_3$ on $I_+$, and suppose $\vert Q_{n+1}(\beta_3)  \vert = r_3$ mod $p$. Then $r_3+ r_2 = 0$ mod $p$ and $r_3 - r_1 = 0$ mod $p$ cannot be simultaneously true. Thus, at least one of the pairs $(\beta_3, \beta_2)$ and $(\beta_3, \gamma)$ satisfies the remainder condition.

The claim follows by induction.
\end{proof}

Now restrict to $\pi$ and consider an assignment of remainders mod $p$ to all bands such the remainders assigned to orientation preserving bands are zero and there is a pair $(\beta_1, \beta_2)$ of orientation reversing bands on $I_+$ and $I_-$ respectively with remainders $(r_1, r_2)$ such that $r_1+ r_2 \neq 0$ mod $p$. Starting from such an assignment, we can derive an assignment of remainders in all stages obtained from $\pi$. 
\begin{claim}\label{orp seqn}
Given an initial assignment of remainders mod $p$ to bands in $\pi$ as described above, there is a splitting sequence from $\pi$ such that in the final stage there is an orientation preserving band with nonzero remainder mod $p$. 
\end{claim}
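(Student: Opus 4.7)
The plan is to produce a splitting sequence from $\pi$ ending at a Rauzy move in which an orientation-reversing band $\tilde\beta_1 \in \A_+$ in the critical position on $I_+$ splits an orientation-reversing band $\tilde\beta_2 \in \A_-$ in the critical position on $I_-$ (or the symmetric situation with the roles of $I_+$ and $I_-$ swapped). The decisive combinatorial observation is that such a Rauzy move transforms the loser into an orientation-preserving band. Indeed, the Rauzy rule for generalized permutations removes the loser letter from its critical position and re-inserts it immediately after the other occurrence of the winner; since the winner is orientation-reversing on its side, its other occurrence lies on the same row, so the loser gains a copy on that row while retaining its original copy on the opposite row, and hence becomes orientation-preserving. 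On the level of columns, the loser's new column is the sum of the old columns of the two critical bands, so its remainder mod $p$ is $\tilde r_1 + \tilde r_2$. If this sum is nonzero then the resulting orientation-preserving band has the required nonzero remainder.

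It therefore suffices to reach a stage $\pi^*$ at which the two critical bands form an orientation-reversing pair $(\tilde\beta_1, \tilde\beta_2)$ on opposite sides with $\tilde r_1 + \tilde r_2 \neq 0 \pmod p$. By the strong connectivity of the attractor $\cG$ (Theorem~\ref{attractors}) we have substantial freedom in choosing splitting sequences. Taking the pair $(\beta_1, \beta_2)$ from the hypothesis, we target a node $\pi^*$ whose critical bands are precisely $\beta_1$ on $I_+$ and $\beta_2$ on $I_-$, and we select a path that never forces $\beta_1$ or $\beta_2$ to lose a split along the way, i.e., whenever either sits at a critical position we take its side to be the winner. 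Since a band's column (and hence its remainder mod $p$) changes only when it loses, both remainders remain $r_1$ and $r_2$ throughout, and the final split at $\pi^*$ yields an orientation-preserving band with remainder $r_1 + r_2 \neq 0 \pmod p$.

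The main obstacle is ensuring such a constrained path always exists: the combinatorial structure of $\cG$ together with the forbidden split in case (3) of Section~\ref{Rauzy} might force one of $\beta_1, \beta_2$ to lose at some intermediate stage, perturbing its remainder. In that event, we fall back on the preceding claim, which guarantees that at every stage of any splitting sequence from $\pi$, either some orientation-preserving band already has nonzero remainder (in which case we are done) or some pair of orientation-reversing bands on opposite sides still has nonzero remainder sum. When our target pair is damaged en route, we redesignate the new nonzero-sum pair supplied by the invariant and continue steering to bring it into critical positions. Since the joint space of generalized permutations and remainder vectors mod $p$ is finite and $\cG$ is strongly connected, this iterative refinement must terminate either at a suitable $\pi^*$ or with an orientation-preserving band of nonzero remainder appearing during the maneuver.
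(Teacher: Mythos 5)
Your core combinatorial observation is exactly the one the paper uses: when an orientation-reversing band $\tilde\beta_1$ on $I_+$ in critical position splits an orientation-reversing band $\tilde\beta_2$ on $I_-$ in critical position, the loser acquires an occurrence on the winner's row while retaining its other occurrence, becomes orientation-preserving, and its column picks up remainder $\tilde r_1+\tilde r_2 \bmod p$. That is the right terminal move. And your steering principle (keep the columns of $\beta_1,\beta_2$ untouched by never letting them lose until they are simultaneously critical) is also the right idea.

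The gap is in justifying that such a steering path exists. Appealing to strong connectivity of $\cG$ (Theorem~\ref{attractors}) does not do the job: strong connectivity tells you any node is reachable from any other, but it gives you no control over \emph{which band loses} along the way, nor does it guarantee that a node $\pi^*$ with both $\beta_1$ and $\beta_2$ simultaneously critical even occurs in the attractor. The paper instead invokes \cite[Proposition 10.1]{Gad}, which supplies a \emph{shortest} splitting sequence bringing one of $\beta_1,\beta_2$ to its critical position; minimality forces neither band to occupy a critical position before the terminal step, so neither can lose, and hence both remainders are preserved for free. One then holds that band as the perpetual winner, which walks the opposite critical position leftward one band at a time until the other of $\beta_1,\beta_2$ becomes critical, again without either losing. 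This two-step construction replaces your ``target $\pi^*$'' step and avoids having to assert existence of a constrained path abstractly.

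Your fallback, however, does not close the gap. The assertion that ``since the joint space of generalized permutations and remainder vectors mod $p$ is finite and $\cG$ is strongly connected, this iterative refinement must terminate'' is not a valid argument: a deterministic (or even adversarially chosen) walk on a finite strongly connected state space can cycle forever without visiting a prescribed subset of states. You would need a monotone quantity, a potential function, or an explicit construction — precisely what the shortest-path argument provides. As written, Plan B replaces one unproved reachability claim with another.
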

\begin{proof}
By \cite[Proposition 10.1]{Gad}, there is a shortest splitting sequence that brings one of $\beta_1$ or $\beta_2$ to the critical position on the corresponding interval $I_+$ or $I_-$. Without loss of generality, say $\beta_1$ is  brought to the critical position on $I_+$. Then the $\beta_1$ and $\beta_2$ remainders remain unchanged during this sequence. We may also assume that the conclusion of the claim is not satisfied during this sequence. After $\beta_1$ is in the critical position on $I_+$ split it by every band to the left of $\beta_2$ on $I_-$ till $\beta_2$ appears in the critical position on $I_-$. Again, the $\beta_1$ and $\beta_2$ remainders remain unchanged, and we might assume that the conclusion of the claim is not satisfied during this sequence. But then, in the next split either $\beta_1$ or $\beta_2$ becomes orientation preserving and has the non-zero remainder $r_1+ r_2$ mod $p$, proving the claim.
\end{proof}

\begin{proof}[Lemma~\ref{sl prime 2}]
For each assignment $\sigma$ as above of remainders to bands in $\pi$, Claim~\ref{orp seqn} gives a splitting sequence $\jmath(\sigma)$ from $\pi$ that produces an orientation preserving band with non-zero remainder mod $p$. Since the number of assignments $\sigma$ is finite (bounded above by $p^d$), there is a bound on the lengths of the sequences $\jmath(\sigma)$. 

By Theorem \ref{Uniform-distortion}, there is a constant $C>1$ such that for almost every $\x \in W(\pi)$ there are infinitely many $\{m_k\}_{k=1}^\infty$ such that $Q_{\x, m_k}$ is $C$-distributed and $\pi_{\x, m_k} = \pi$. If the column norms in $Q_{\x, m_k}$ of all orientation preserving bands in $\pi$ are divisible by $p$ then consider the remainders mod $p$ of the column norms of all bands. Call this assignment $\sigma_k$. As established earlier, there must be a pair of orientation reversing bands $(\beta_1, \beta_2)$ with remainders $(r_1, r_2)$ such that $r_1+ r_2 \neq $ mod $p$. In this case, consider $\pi \to \cdots \pi_{\x, m_k}$ followed by $\jmath(\sigma_k)$. Since the length of $\jmath(\sigma_k)$ is bounded above independent of $\x$ and $m_k$, there is a lower bound independent of $\x$ and $m_k$ on the relative probability that $\jmath(\sigma_k)$ follows $\pi \to \cdots \to \pi_{\x, m_k}$. For the same reason, there is also an upper bound independent of $\x$ and $m_k$ on the distortion introduced by $\jmath(\sigma_k)$. 

By Strong Normality (Theorem~\ref{Strong-Normality}), for almost every $\x$, there is a subsequence of $\{m_k\}$, call it $\{n_k\}$, such that either the column norm in $Q_{\x, n_k}$ of some orientation preserving band is coprime to $p$ or the sequence $\jmath(\sigma_k)$ follows $\pi \to \cdots \to \pi_{\x, n_k}$ in the expansion for $\x$, resulting in an orientation preserving band with column norm coprime to $p$. Given that $\jmath(\sigma_k)$ introduces a distortion bounded above independently of $\x$ and $n_k$, we can choose a bigger constant $C$ such that all of these stages are $C$-distributed. This proves Lemma~\ref{sl prime 2}.
\end{proof}

\begin{proof}[Proposition \ref{prime approx}] For almost every $\x$, there infinitely many $\{n_k\}_{k=1}^{\infty}\subset \mathbb{N}$ such that the matrix $Q_{\x,n_k}$ is $C$-distributed. By Lemmas~\ref{sl prime} and~\ref{sl prime 2}, there is $\alpha_k$ orientation preserving in $\pi_{\x, n_k}$ such that $\vert Q_{\x, n_k} (\alpha_k) \vert$ is coprime to $p$. Let $X_k$ denote the set of $\x$ such that $\mathcal{R}^{n_k}(\x) \in W(\pi,\alpha_k,\delta/C)$. From the proof of Theorem~\ref{cyclic}, the exchanges $\x \in X_k$ have a cyclic approximation of height $\vert Q_{\x, n_k}(\alpha_k) \vert$ with constant $\delta$. 

Let  $\mu$ be the minimum over $\alpha \in \A$ of $\mathbf{m}( W(\pi, \alpha, \delta/C))$. By Lemma~\ref{rel-prob}, up to a universal multiplicative constant, we have $\mathbf{m}(X_k) \geqslant \mu$. Thus, $\sum_k \mathbf{m} (X_k) = \infty$. 
Moreover, since the sets $X_k$ are $C$-distributed we have 
$\mathbf{m}(X_k\cap X_L)\leqslant C\mathbf{m}(X_k)\mathbf{m}(X_l)$, and so almost every $\x$ belongs to some $X_k$, and in fact, belongs to infinitely many $X_k$. 
\end{proof}
\begin{proof}[Theorem \ref{tot-erg}] By Proposition \ref{suff cond} and the fact that a countable intersection of full measure sets has full measure it suffices to prove Proposition \ref{prime approx}.
\end{proof}

\begin{remark}
Starting from a generalized permutation with all bands orientation reversing, in all later stages, the column norms are even for all orientation preserving bands and odd for all orientation reversing bands. Thus, all cyclic approximations have even height. This is in line with the fact that $\x^2$ leaves $I_+$ and $I_-$ each invariant. In fact, Proposition~\ref{suff cond} shows that these are the only invariant subsets for $\x^2$, and also that all odd iterates of $\x$ are ergodic. 
\end{remark}

\section{Proof of Theorem~\ref{disjoint}}
\noindent The proof of Theorem~\ref{disjoint} now follows verbatim as in \cite{Cha}. The results of the preceding sections establish the necessary facts for non-classical exchanges.
In particular, we use the criterion for generic disjointness \cite[Remark 9]{Cha}. It suffices to show that 
\begin{enumerate}
\item For any $\alpha$ almost every $\x$ does not have $\alpha$ as an eigenvalue.
\item For any $A \subset \mathbb{N}$ with density 1 almost every $\x$ has a rigidity sequence in $A$.
\end{enumerate}
Corollary \ref{irrat eigen} and Theorem \ref{tot-erg} establish condition 1. Theorem \ref{rigid} establishes condition 2.

\appendix
\section*{Appendix}
In this appendix, we prove disjointness of vertical flows for quadratic differentials, namely Theorem \ref{flow disjoint}. Theorem \ref{flow disjoint} is easier to prove than Theorem \ref{disjoint} by using the mixing of \teichmuller geodesic flow, namely Theorem \ref{M-V}.

We first set out some preliminaries. A quadratic differential $q$ on a Riemann surface $X$ defines by contour integration a singular flat metric on $X$ exhibiting it as a half-translation surface: the surface $X$ carries a set of charts to $\mathbb{C} = \mathbb{R}^2$ with transition functions of the form $z \to \pm z + a$. We restrict to unit area quadratic differentials i.e., the area of the surface with the induced flat metric is 1. Conversely, a singular flat metric on a surface $S$ with the above properties defines not only a conformal structure $X$ on it but also a quadratic differential on $X$. In summary, these turn out to be equivalent notions. We will denote the half-translation surface associated to $q$ by $X(q)$. We denote the space of unit area quadratic differentials by $\mathcal Q$. The group $SL(2, \R)$ has a natural action on $\mathcal Q$ as affine transformations of $X(q)$. The action of the diagonal part
\[
g_t = \left[ \begin{array}{cc} e^t & 0 \\ 0 & e^{-t} \end{array} \right]
\]
is called \teichmuller geodesic flow. The Masur-Veech measure $\mathbf{m}$ is the $g_t$-invariant Liouville measure on $\mathcal{Q}$. 

A choice of a basis for the relative homology of the surface defines by integration (holonomy or period) local coordinates on $\mathcal{Q}$ and the Masur-Veech measure is the Lebesgue measure class in these co-ordinates. See \cite{Mas2} for  a detailed discussion of quadratic differentials.

\begin{theorem}\label{M-V}(Masur-Veech \cite{Mas}, \cite{Vee})The \teichmuller geodesic flow $g_t$ is mixing for the Masur-Veech measure.
\end{theorem}
The proof of Theorem \ref{flow disjoint} again follows the strategy in outlined in \cite[Remark 9]{Cha} namely to show that any sequence of density 1 contains a rigidity sequence for almost every transformation and for any $\alpha \neq 1$, the set of transformations with $\alpha$ as an eigenvalue has measure zero.

Rigidity sequences for a vertical flow have many equivalent definitions. For the purposes of this appendix we will require an approximate version of $\epsilon$ rigidity times. We choose the following definition: given a quadratic differential $q$ let $d$ denote the flat metric on $X(q)$ and let $dA$ be the natural area form from its charts to $\mathbb{C}$. We define $t$ to be an $\epsilon$ rigidity time for a flow $F$ on the half-translation surface if $\int_{X(q)} d(x, F^t (x))\,dA<\epsilon.$

\begin{proposition}\label{lots rig}Let $A$ be a sequence in $\mathbb{R}$ with positive upper density. Then almost every quadratic differential's vertical flow has a rigidity sequence in $A$.
\end{proposition}
\begin{corollary}\label{eigen zero} Let $\alpha \neq 1$ then 
\[
\mathbf{m}(\{q: \alpha  \text{ is an eigenvalue of the vertical flow on }q\})=0.
\]
\end{corollary}
Since rational eigenvalues are not special for $\mathbb{R}$-actions, Corollary \ref{eigen zero} follows from Proposition \ref{lots rig} analogous to how Corollary \ref{irrat eigen} follows from Theorem \ref{rigid}.

\begin{proof}[Theorem \ref{flow disjoint} assuming these results] Analogous to the proof of Theorem \ref{disjoint}, Theorem \ref{flow disjoint} follows from Proposition \ref{lots rig} and Corollary \ref{eigen zero}.
\end{proof}

It remains to prove Proposition \ref{lots rig}.  A metric cylinder for a quadratic differential $q$ is a Euclidean cylinder in the flat metric on $X(q)$. Roughly speaking, if $X(q)$ has a metric cylinder with period $T$ in direction close to vertical and area close to 1, then the vertical flow has a $O(\epsilon)+2\epsilon \, \text{diam} (X(q))$ rigidity time of about $T$. This is because the vertical trajectories that stay in the almost vertical cylinder return close. The trajectories not in the almost vertical cylinder have small measure and are distance at most $\text{diam} (X(q))$ from their starting point. We make this precise below.

\begin{lemma}\label{key} Suppose $X(q)$ has a cylinder $C$ in direction $\phi$ with area at least  $1-\epsilon$ and period $T\geqslant 1$. Let $\theta$ be a direction such that $|\theta-\phi|<  \epsilon/T^2$. If $F_{\theta}^t(x) $ is inside $C$ for all $0<t<T $ then 
$d(F^t_{\theta}(x),F^t _{\phi}(x))<\epsilon/T$. In particular, $d(F^T_\theta (x), x) < \epsilon/T$. 
\end{lemma}
\begin{proof}
Since a cylinder is convex, there is no singularity along the flat geodesic inside the cylinder $C$ between $F^t_{\theta}(x)$ and $F^t_{\phi}(x)$. Consequently, 

\[d(F^t_{\theta}(x),F^t_{\phi}(x))= 2t  \sin \left(\frac{\vert \theta-\phi \vert}{2}\right) \leqslant t \vert \theta-\phi \vert < \epsilon/T.\] 
\end{proof}
\begin{corollary}If $\epsilon$ is small enough then under the assumption of the previous lemma the flow $F_\theta$ has $\epsilon +3\epsilon  \text{diam}(X)$ rigidity time at $T$.
\end{corollary}
\begin{proof}  We show that if $\epsilon$ is small enough then $T$ is a rigidity time by estimating $\int_{X(q)} d(F_{\theta}^T (x),x)\, d A$. Split the integral $\int_{X(q)} d(F_{\theta}^T (x),x)\, d A$ over the set $W$ of points such that $F^{t}_{\theta}(x)$ is in $C$ for $0<t<T$, and complement $X(q) \setminus W$. By the previous lemma $d(F_{\theta}^T(x),x)< \epsilon/T < \epsilon$ for all $x \in W$ whereas $d(F_{\theta}^T(x), x) \leqslant \text{diam} X(q)$ over $X(q) \setminus W$. It remains to estimate the area of $X(q) \setminus W$. To do this first note that $d(x, \partial C) \geqslant 2T \sin (\vert \theta - \phi \vert/2)$ is a sufficient condition for $x$ to be in $W$. To maximize the area of $X(q) \setminus W$ we maximize the right hand side of the inequality to consider the extremal case when $X(q) \setminus W$ contains all points $x$ such that $d(x, \partial C) \leqslant \epsilon/T$. The cylinder $C$ has circumference $T$ and so the area of the set of points $x \in C$ such that $d(x, \partial C)\leqslant \epsilon/T$ is $\epsilon$. Thus the area of $X(q) \setminus W$ is bounded above by $3 \epsilon$.
\end{proof}

\begin{lemma}\label{lem:key} Let $M>1$.  For any small enough $\epsilon>0$  there exists a constant $c_{M,\epsilon}:=c$ depending on the genus, $M$ and $\epsilon$ such that for any measurable set $U \subset \mathcal{Q}$ for which the diameters of surfaces in $U$ are at most $D$, there is $L>1$ large enough such that 
\[
\mathbf{m}(\{q\in U: \text{ the vertical flow on }q \text{ has a }4D\epsilon \text{ rigidity time in }(L,LM\})\geqslant c\mathbf{m}(U).
\]
\end{lemma}
The proof of rigidity boils down to renormalization dynamics of the \teichmuller flow. The renormalization dynamics are measure preserving and mixing and so we obtain the lemma.

\begin{proof} \textbf{(Renormalization dynamics)}
Since matrices act affinely on half-translation surfaces if $X(g_tq)$ has a metric cylinder of area at least $1-\epsilon$, period $T$, in direction $\theta$ with the vertical foliation then $X(q)$ has a cylinder of area at least $1-\epsilon$ with period $$\sqrt{(e^tT\cos \theta)^2+(e^{-t}T\sin \theta)^2}$$ in direction 
$$\arctan(e^{-2t}\tan\theta)$$  with the vertical. By applying the previous corollary it follows that if  $X(g_tq)$ has a cylinder of area at least $1-\epsilon$, period $T$, in direction within $\epsilon/T^2$ of the vertical then the vertical flow on $X(q)$ has a $\epsilon + 3\epsilon \text{diam} (X(q))$ rigidity time $e^tT$. 

To complete the proof, consider the set of quadratic differentials that have a cylinder with area at least $1-\epsilon$, period in $T \in (1,M)$ and in direction within angle $\epsilon/T^2$ of the vertical. These are open conditions in period coordinates on $\mathcal{Q}$ and hence the set contains some closed ball $B$. As shown above if $g_tq\in B$ then $q$ has a cylinder of period between $(e^t\cos(\epsilon),e^tM(\cos(\epsilon))+e^{-t})$ in direction within $\epsilon/T^2e^{2t}$ of  the vertical. So it has a $4D \epsilon$ rigidity time in $(e^t\cos(\epsilon),e^tM(\cos(\epsilon))+e^{-t}) \asymp (e^t, M e^t)$ where the approximation is justified since the error is $O(\epsilon^2)$. The lemma follows because $g_t$ is measure preserving and mixing and so $c$ can be chosen to be any number smaller than $\mathbf{m}(B)$.
\end{proof}

In subsequent arguments we will apply approximations such as above without comment. It should be noted that the above proof does not require that the periods be restricted to be in $(1, M)$. It goes through so long as the ratio of the largest to the smallest period is bounded by $M$. In other words, the proof works even if $ T \in (T', T'M)$ for some fixed $T'$.

Following the same ideas as Lemma \ref{lem:key} we get:
\begin{corollary}\label{cor:local} Given $K$ a compact part of $\mathcal{Q}$ there exists $c'$ depending only on the maximal diameter of surfaces in $K$ and $\epsilon$ so that if $A \subset \mathbb{R}$ then for any open set $U \subset K$ and all large enough $L$ we have
\[
\mathbf{m}(\{q\in U: \text{ the vertical flow on }q \text{ has an }\epsilon \text{ rigidity time in } A \cap (L,LM\}) >c'\frac{|A\cap (L,ML)|}{(M-1)L}\mathbf{m}(U).
\]
\end{corollary}

To prove the corollary we need the straightforward fact that any ball in $\mathcal{Q}$ contains an appropriate \texttt{"}flowbox\texttt{"} with definite proportion of the measure. Let $B(q, r)$ denote the ball of radius $r$ in the period co-ordinates centered at $q$. 
\begin{lemma}\label{foliate} For any $B(q,r)$, $e>0$ there exists $\delta>0$, $V \subset B(q,r)$ so that 
\begin{enumerate}
\item $g_sV \in B(q,r)$ for all $0\leqslant s\leqslant \delta$
\item $g_sV \cap g_{s'}V=\emptyset$ for all $0\leqslant s < s'\leqslant \delta$
\item $\mathbf{m} \left(B(q,r)\setminus \cup_{s\in (0,\delta)}g_sV\right) < e \mathbf{m}(B(q,r))$
\item $\cup_{s \in (0,r)}g_sV$ is open for all $0<r<\delta$.
\end{enumerate}
\end{lemma}

\begin{proof}[Corollary \ref{cor:local}]
It suffices to prove the corollary for balls. So suppose $U=B(q,r)$. Set $e=1/9$ and let $\delta,V$ be as in Lemma \ref{foliate}. Set $M= 1 + \delta/4$ for the range of periods in Lemma \ref{lem:key} and let $B'$ be the ball in Lemma \ref{lem:key} corresponding to this choice of $M$. The proof proceeds in two steps. First, mixing of the geodesic flow implies that for all large times $t$ a definite proportion of $g_tV$ lands in $B'$. Second, for every $q \in V$ such that $g_tV \in B'$, a subset with definite proportion of the flow line $g_s q$ given by Lemma \ref{foliate} has rigidity times in $A$. The corollary then follows by applying Fubini's theorem. 

To apply mixing, we pass to a thin slice with base $V$. To be precise, for $\delta' < < \delta$ mixing of the geodesic flow implies that for all $t$ large enough we have 
\[
\mathbf{m}\left(B' \cap g_t  \left(\cup_{s \in (0, \delta')} g_s V\right) \right)\geqslant \frac 1 2 \mathbf{m}(B')\mathbf{m}\left(\cup_{s \in (0,\delta')}g_sV\right).
\]
Following Lemma \ref{lem:key}, a quadratic differential $q' \in \cup_{s \in (0, \delta')} g_s V$ such that $g_t q' $ is in $B'$ has a rigidity time at $e^t \text{period}(g_t q')$. With our choice of $M$ this means that $q'$ has a rigidity time between $e^t$ and $e^t(1 + \delta/4)$. Since $g_{t-s} (g_s q') = g_t q'$ it follows that $g_sq'$ has a rigidity time at $e^{t-s}$period$(g_tq)$. So for any $r\in [e^{t-\delta/2},e^t]$ we have that $r$ is a rigidity time for some $g_sq'$ where $s < \delta$. 

Let $\ell$ be the Lebesgue measure on $\mathbb{R}$ and let $\rho$ be the upper density of $A$. Then there exists arbitrarily large $t$ so that 
\begin{equation*}\label{eq:good time}
\ell([e^{t-\delta/2},e^{t}]\cap A)> \frac{\rho}{2} e^{t-{\delta/2}}(e^{\delta/2}-1).
\end{equation*}

For $t$ satisfying both inequalities and $q' \in \cup_{s \in (0, \delta')} g_s V$ such that $g_t q' $ is in $B'$, we consider the set of $s$ such that $g_s q'$ has a rigidity time $r(s) \in [e^{t-\delta/2},e^{t}]\cap A$. Since the ratio of the derivatives of the function $s \to e^{t-s}$ at distinct values of $s$ in $(0, \delta)$ stays bounded between $e^{-\delta}$ and $e^{\delta}$ it implies that there is a constant $c'$ such that 
\[
\frac{1}{\delta} \ell \left(\{s: g_sq' \text{ has a rigidity time in }[e^{t-\delta/2},e^{t}]\cap A\}\right)> c' \frac{\ell([e^{t-\delta/2},e^{t}]\cap A)}{e^{t-\delta/2}(e^{\delta/2}-1)} > \frac{\rho c'}{2}.
\]
In other words, a definite proportion of the flow line segment from $q'$ has rigidity times in $A$. 
 
The above inequality along with the inequality from mixing allow us to apply Fubini's theorem to conclude that 
\[
\mathbf{m}(\{q'\in B(q,r): q' \text{ has a rigidity time in }[e^{t-\delta/2},e^{t}]\cap A \})>
 \frac{1}{4} \rho c'  \mathbf{m}(B')\mathbf{m}\left(\cup_{s \in (0, \delta)} g_s V \right) > \frac{1}{36}\rho c' \mathbf{m}(B')  \mathbf{m}(B(q,r)).
 \]
However, the ball $B'$ depends on $M$ which in turn depends on $r$ and so $\mathbf{m}(B')$ may go to zero as $r$ goes to zero. Thus, the lower bound above for the proportion of $B(q,r)$ with rigidity times in $A$ is not uniform in $r$ and hence the proof of the corollary is not yet complete. 

Notice that just as in Lemma \ref{lem:key} the proof of the above inequality does not require that the periods be restricted between $1$ and $1+ \delta/4$. It works as long as the ratio of the largest to the smallest period is bounded above by $1+ \delta/4$ i.e., as long as the variation in the periods in $B$ is appropriately controlled. So to complete the proof of the corollary we follow the following steps: given a compact part $K$ and $\epsilon$ (but  before knowing $B(q,r)$) fix some $M> 1$ to obtain the ball $B$ as in Lemma \ref{lem:key}. Then given $B(q,r) \subset K$ obtain $\delta$ by Lemma \ref{foliate} and  partition $B$ into finitely many measurable sets $B_k$ where the period does not change by more than a factor of $1+ \delta/4$. Then for each $k$ we get
\[
\mathbf{m}(\{q'\in B(q,r): q' \text{ has a rigidity time in }[e^{t-\delta/2},e^{t}]\cap A \})>
 \frac{1}{36} \rho c'  \mathbf{m}(B_k)\mathbf{m}(B(q,r)).
\]
Summing over $k$ we get
\[
\mathbf{m}(\{q'\in B(q,r): q' \text{ has a rigidity time in }[e^{t-\delta/2},e^{t}]\cap A \})>
 \frac{1}{36} \rho c'  \mathbf{m}(B)\mathbf{m}(B(q,r)).
\]
proving the corollary.
\end{proof}

\begin{proof}[Proposition \ref{lots rig}] 
It suffices to show that for every $\epsilon>0$ the set of quadratic differentials that does not have 
an $\epsilon$ rigidity time in $A$ has no Lebesgue density points. By Corollary \ref{cor:local} for every $q'$ and $\epsilon>0$ there exists $c'$ such that for a small enough ball $B(q',r)$ we have
\[
\mathbf{m}(\{q'' \in B(q',r): q'' \text{ does not have an }\epsilon \text{ rigidity time in }A\})<\left(1- \frac{1}{36} \rho c'\right) \mathbf{m}(B(q',r)).
\]
This implies that $q'$ can not be a Lebesgue density point for the set of quadratic  differentials that do not have 
an $\epsilon$ rigidity time in $A$.

\end{proof}

\end{document}